\numberwithin{equation}{section}
\newtheorem{theorem}{Theorem}[section]
\newtheorem{proposition}[theorem]{Proposition}
\newtheorem{lemma}[theorem]{Lemma}
\newtheorem{follow}[theorem]{Corollary}
\newtheorem{pr}[theorem]{Proposition}
\theoremstyle{definition}
\newcommand{\bel}{\begin{equation} \label}
\newcommand{\ee}{\end{equation}}
\newcommand{\one}{\mathds{1}}
\newcommand{\zero}{\mathbb{O}}
\newcommand{\R}{{\mathbb R}}
\newcommand{\re}{{\mathbb R}}
\newcommand{\rd}{{\mathbb R}^{2}}
\begin{document}
\begin{center}{\Large \bf Dirichlet and Neumann Eigenvalues for Half-Plane Magnetic  Hamiltonians}

\medskip

{\sc Vincent Bruneau, Pablo Miranda, Georgi Raikov}

\medskip
\today
\end{center}

\bigskip

{\bf Abstract.} {\small Let $H_{0, D}$ (resp., $H_{0,N}$) be the Schr\"odinger operator in constant magnetic field on the half-plane with Dirichlet (resp., Neumann) boundary conditions, and let $H_\ell : = H_{0, \ell} - V$, $\ell =D,N$, where the scalar potential $V$ is non negative, bounded, does not vanish identically, and decays at infinity.  We compare the distribution of the  eigenvalues of $H_D$ and $H_N$ below the respective infima of the essential spectra. To this end, we construct effective Hamiltonians which govern the asymptotic behaviour of the discrete spectrum of $H_\ell$ near $\inf \sigma_{\rm ess}(H_\ell) = \inf \sigma(H_{0,\ell})$, $\ell = D,N$. Applying these Hamiltonians, we show that $\sigma_{\rm disc}(H_D)$ is infinite even if $V$ has a compact support, while $\sigma_{\rm disc}(H_N)$ could be finite or infinite depending on the decay rate of  $V$. }\\

{\bf Keywords}: magnetic Schr\"odinger operators, Dirichlet and Neumann boundary conditions,
eigenvalue distribution\\

{\bf  2010 AMS Mathematics Subject Classification}:  35P20, 35J10,
47F05, 81Q10\\

\section{Introduction}\setcounter{equation}{0}
\label{s1}

Let ${\mathcal O} : = \re_+ \times \re$ with $\re_+ : = (0,\infty)$, $b>0$, and $H_{0,D}$ (resp., $H_{0,N}$)
   be the Friedrichs extension in $L^2(\mathcal{O})$ of the operator
    \bel{dj1}
     -\frac{\partial^2}{\partial x^2} +
    \left(-i\frac{\partial}{\partial y} - bx\right)^2 ,
    \ee
   defined originally on $C_0^\infty ({\mathcal O})$ (resp., on $C_0^\infty (\overline{\mathcal O})$). Thus, $H_{0,D}$ (resp., $H_{0,N}$) is the Dirichlet (resp., Neumann) realization of  \eqref{dj1} which is the half-plane Schr\"odinger operator with (scalar) constant  magnetic field $b$. Hamiltonians of this type arise in various areas of mathematical and theoretical physics: for instance, $H_{0,D}$ and its perturbations are important models in the theory of the quantum Hall effect (see e.g. \cite{bp}), while the spectral properties of $H_{0,N}$ play a central role in the contemporary theory of superconductivity (see \cite{fh}).  It is well known that the spectrum $\sigma(H_{0,\ell})$ of $H_{0,\ell}$, $\ell = D,N$, is purely absolutely continuous, and
$$
\sigma(H_{0,\ell}) = [{\mathcal E}_\ell, \infty)
$$
with ${\mathcal E}_D = b$ (see e.g \cite{bp}) and ${\mathcal E}_N \in (0,b)$ (see e.g \cite{dh}).  Further, assume that $0 \leq V \in L_0^{\infty}({\mathcal O}) : = \left\{u  \in L^{\infty}({\mathcal O})  \, | \, \lim_{|{\bf x}| \to \infty} u({\bf x}) = 0\right\}$.
    Set
  $$
  H_{D} : = H_{0,D} - V, \quad H_{N} : = H_{0,N} - V.
  $$
  By the diamagnetic inequality (see e.g. \cite[Proposition A.2]{hlmw}), the operators
  $V H_{0,\ell}^{-1}$ are compact, and therefore
  $$
   \sigma_{\rm ess}(H_{\ell}) = \sigma_{\rm ess}(H_{0,\ell}) = \sigma(H_{0,\ell}) = [{\mathcal E}_\ell, \infty),  \quad \ell = D,N.
   $$
   However, the interval $(-\infty, {\mathcal E}_\ell)$ may contain discrete eigenvalues of the operator $H_{\ell}$ whose number could be finite or infinite; in the latter case they could accumulate only at ${\mathcal E}_\ell$. The main goal of the present article is to compare qualitatively the behaviour of the discrete spectra of $H_D$ and $H_N$. For $\lambda > 0$ set
   $$
   {\mathcal N}_\ell(\lambda) : = {\rm Tr} \, \one_{(-\infty, {\mathcal E}_\ell - \lambda)}(H_\ell), \quad \ell = D,N;
   $$
   here and in the sequel $\one_{\mathcal I}$ denotes the characteristic function of the set ${\mathcal I}$. Thus ${\mathcal N}_\ell(\lambda)$ is just the number of the eigenvalues of the operator $H_\ell$ lying on the interval $(-\infty, {\mathcal E}_\ell - \lambda)$, and counted with the multiplicities. \\
   In Section \ref{s2} we introduce the effective Hamiltonians which govern the asymptotics of ${\mathcal N}_\ell(\lambda)$, $\ell = D,N$, as $\lambda \downarrow 0$. The effective Hamiltonians in the Dirichlet  and the Neumann case are quite different due to the different nature of the infima ${\mathcal E}_D$ and ${\mathcal E}_N$ of the spectra of $H_{0,D}$ and $H_{0,N}$. Actually, since both operators $H_{0,D}$ and $H_{0,N}$ are analytically fibred over $\re$, their spectra  have a band structure, and the infima of the spectra coincide with the infima of the first (lowest) band functions. In the case of $H_{0,D}$ the infimum of the first band function is its limit at infinity, while in the case of $H_{0,N}$ the corresponding infimum
   is attained at a (unique) $k_* \in \re$. \\
   We believe that the effective Hamiltonians introduced in the present article could be useful also in the spectral analysis of the perturbations of many other analytically fibred operators whose spectral infimum resembles  $\inf{\sigma(H_{0,D})}$ or $\inf{\sigma(H_{0,N})}$.\\
   Applying our  effective Hamiltonians, we obtain several results concerning the asymptotic behaviour of the discrete spectra of $H_D$ and $H_N$. In particular, we show that the Dirichlet Hamiltonian $H_D$ has infinitely many discrete eigenvalues even in the case of compactly supported $V$, while the Neumann operator $H_N$ may have infinitely or finitely many discrete eigenvalues depending on the decay rate of the effective potential $v$ occurring in the corresponding Neumann effective Hamiltonian (see below \eqref{dj12} and \eqref{dj13}); for example, $\sigma_{\rm disc}(H_N)$ is finite if
   $\limsup_{|y| \to \infty} y^2 \sup_{x \in \re_+} V(x,y)$
   is small enough. \\
   The article is organized as follows. In Section \ref{s2} we formulate our main results and briefly comment on them. Section \ref{s3} contains the proofs of the results concerning the Dirichlet Hamiltonian $H_D$, while the proofs of the ones concerning the Neumann Hamiltonian $H_N$ could be found in Section \ref{s4}.
\section{Statement of the main results} \label{s2}
\subsection{Analytic fibering of $H_{0,\ell}$} \label{ss21}

Let ${\mathcal F}$ be the partial Fourier transform with respect to
$y$, i.e.
$$
({\mathcal F}u)(x,k) = (2\pi)^{-1/2} \int_{\re} e^{-iyk} u(x,y)\,dy, \quad (x,k) \in {\mathcal O}.
$$
Then we have
$$
{\mathcal F} H_{0,\ell} {\mathcal F}^* = \int_\R^{\oplus} h_\ell(k) dk, \quad \ell = D, N,
$$
where  $h_D(k)$ (resp., $h_N(k)$)
 is the Friedrichs extension in $L^2(\re_+)$ of the operator
  \bel{dj3}
 - \frac{d^2}{dx^2} + (bx-k)^2, \quad k \in \R,
\ee
 defined originally on $C_0^\infty (\re_+)$ (resp., on $C_0^\infty (\overline{\re_+})$). Thus $h_{D}$ (resp., $h_{N}$) is the Dirichlet (resp., Neumann) realization of  \eqref{dj3}.
Note that the operators $h_\ell$, $\ell = D,N$, are Kato analytic families (see \cite{kato}). Moreover, for each $k \in \re$ the operators $h_\ell(k)$   have  discrete and simple spectra. Let $E_\ell(k)$, $k \in \re$, be the first (lowest) eigenvalue of $h_\ell$, $\ell = D, N$. By the Kato analytic perturbation theory, $E_\ell(k)$ are real analytic functions of $k \in \re$. Evidently,
$$
{\mathcal E}_\ell = \inf_{k \in \re} E_\ell(k).
$$
Let us recall some of the properties of the functions $E_\ell$ which we will need in the sequel. In both cases $\ell = D,N$, we have
$$
\lim_{k \to \infty} E_\ell(k) = b
$$
(see e.g. \cite{bp} for $\ell = D$, and e.g. \cite{dh} for $\ell = N$). Moreover, the mini-max principle easily implies that
$$
E_\ell(k) = k^2 (1 + o(1)), \quad k \to -\infty.
$$
However, $E_D'(k) < 0$ for any $k \in \re$ (see \cite{bp}) while there exists $k_* \in (0,\infty)$ such that $E_N'(k) < 0$ for $k \in (-\infty, k_*)$,
$E_N'(k) > 0$ for $k \in (k_*, \infty)$, $E''_N(k_*) > 0$, and $E_N(k_*) \in (0,b)$ (see \cite{dh}). Thus
$$
{\mathcal E}_D = b = \lim_{k \to \infty} E_D(k), \quad {\mathcal E}_N = E_N(k_*).
$$
Finally, introduce the real valued eigenfunctions $\psi_\ell(\cdot; k)$ satisfying
$$
h_\ell(k) \psi_\ell(\cdot; k) = E_\ell(k) \psi_\ell(\cdot; k), \quad \|\psi_\ell(\cdot;k)\|_{L^2(\re_+)} = 1, \quad k \in \re, \quad \ell = D,N,
$$
such that the mappings $\re \ni k \mapsto \psi_\ell(\cdot; k) \in {\rm Dom}\,(h_\ell)$ are analytic.

\subsection{Main results for Dirichlet Hamiltonians} \label{ss22}

Set
    \bel{61}
\varphi(x) : = \pi^{-1/4} b^{1/4} e^{-bx^2/2}, \quad x \in \re,
    \ee
    \bel{sof9}
\psi_{D,\infty}(x;k) = \varphi(x - k/b),
 \quad x \in \re, \quad k \in \re.
 \ee
Then we have
    \bel{lau40}
-\frac{\partial^2 \psi_{D,\infty}(x;k)}{\partial x^2} + (bx-k)^2
\psi_{D,\infty}(x;k) = b \psi_{D,\infty}(x;k), \quad
\|\psi_{D,\infty}(\cdot;k)\|_{L^2(\re)} = 1.
    \ee
    For $(x,\xi) \in T^*\re$ define the function
    $$
    \Psi_{x,\xi}(k) : = b^{-1/2} e^{-i\xi k} \psi_{D,\infty}(x;k), \quad k \in \re.
    $$
    As explained in \cite[Section 3]{bmr}, the system
    $\left\{\Psi_{x,\xi}\right\}_{(x,\xi) \in T^*\re}$ is overcomplete with respect to the measure $\frac{b}{2\pi} dx d\xi$ (see \cite[Subsection 5.2.3]{bershu} for the general definition of an overcomplete system with respect to a given measure). Introduce the orthogonal projection
    $$
    {\mathcal P}_{x,\xi} : = |\Psi_{x,\xi} \rangle \langle \Psi_{x,\xi}|, \quad (x,\xi) \in T^*\re,
    $$
    acting in $L^2(\re)$, and the anti-Wick-type operator ${\mathcal V} : L^2(\re) \to L^2(\re)$ defined as the weak integral
    $$
    {\mathcal V} : = \frac{b}{2\pi} \int_{{\mathcal O}} V(x,\xi) \, {\mathcal P}_{x,\xi}\, dx d\xi.
    $$
    Since $V \in L_0^{\infty}({\mathcal O})$, the operator ${\mathcal V}$ is compact. Then the effective Hamiltonian which governs the asymptotics of ${\mathcal N}_D(\lambda)$ as $\lambda \downarrow 0$ is $E_D - {\mathcal V} : L^2(\re) \to L^2(\re)$ where $E_D$ should be interpreted here as the multiplier by the function $E_D$. More precisely, we have the following

    \begin{theorem} \label{thdj1}
    Assume $0 \leq V \in L_0^{\infty}({\mathcal O})$. Then for each $\varepsilon \in (0,1)$ we have
    $$
    {\rm Tr}\,\one_{(-\infty, {\mathcal E}_D - \lambda)}(E_D  - (1-\varepsilon){\mathcal V}) + O_\varepsilon (1) \leq
    $$
    $$
    {\mathcal N}_D(\lambda) \leq
    $$
    \bel{dj5}
    {\rm Tr}\,\one_{(-\infty, {\mathcal E}_D - \lambda)}(E_D  - (1+\varepsilon){\mathcal V}) + O_\varepsilon (1), \quad \lambda \downarrow 0.
    \ee
    \end{theorem}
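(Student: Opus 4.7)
The plan is to combine the Birman--Schwinger principle with the fibering $\mathcal{F}H_{0,D}\mathcal{F}^* = \int^{\oplus}h_D(k)\,dk$, restrict the analysis to the first band, transfer to $L^2(\re)$, and then identify the resulting operator with $E_D - \mathcal{V}$ up to errors absorbable through Weyl--Ky~Fan inequalities. Since $\mathcal{E}_D - \lambda = b - \lambda$ lies strictly below $\sigma(H_{0,D})$, the Birman--Schwinger principle yields
$$
\mathcal{N}_D(\lambda) = n_+\!\bigl(1;\, V^{1/2}(H_{0,D} - (b-\lambda))^{-1} V^{1/2}\bigr),
$$
where $n_+(s;T)$ counts the eigenvalues of the compact self-adjoint $T$ exceeding $s$. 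Writing $h_D(k) = E_D(k) P_D(k) + h_D(k)(I - P_D(k))$ with $P_D(k) := |\psi_D(\cdot;k)\rangle\langle\psi_D(\cdot;k)|$, I would split the resolvent accordingly. The remainder is based on a spectrum uniformly bounded above $b$ (as $k\to+\infty$ the second eigenvalue of $h_D(k)$ tends to $3b$, and it is bounded below by a positive constant uniformly in $k$); once sandwiched with $V^{1/2}$ it is compact by the diamagnetic bound and contributes only $O_\varepsilon(1)$ to the Birman--Schwinger count.

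Introduce next the partial isometry $\Pi_D: L^2(\re) \to L^2(\mathcal{O})$,
$$
(\Pi_D u)(x,y) := (2\pi)^{-1/2} \int_\re e^{iyk}\psi_D(x;k)u(k)\,dk,
$$
which satisfies $\Pi_D^*\Pi_D = I$ and $\mathcal{F}\Pi_D\Pi_D^*\mathcal{F}^* = \int^{\oplus} P_D(k)\,dk$. The first-band piece of the Birman--Schwinger operator is unitarily equivalent, via cyclic invariance of the non-zero spectrum, to $(E_D - (b-\lambda))^{-1/2}\,\Pi_D^* V \Pi_D\, (E_D - (b-\lambda))^{-1/2}$ acting on $L^2(\re)$. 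For any $A\ge 0$ a direct rewriting gives
$$
n_+\!\bigl(1;\, (E_D-(b-\lambda))^{-1/2} A (E_D-(b-\lambda))^{-1/2}\bigr) = \operatorname{Tr}\,\one_{(-\infty, b-\lambda)}(E_D - A),
$$
so that matters reduce to comparing the operator $\Pi_D^* V \Pi_D$ on $L^2(\re)$ with $\mathcal{V}$.

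A short kernel computation shows that \emph{if} one replaces $\psi_D(x;k)$ by $\psi_{D,\infty}(x;k) = \varphi(x-k/b)$ inside the definition of $\Pi_D$, the resulting operator coincides \emph{exactly} with $\mathcal{V}$: the variable $\xi$ in $\Psi_{x,\xi}$ is identified with the Fourier variable $y$, and the normalization $b^{-1/2}$ of $\Psi_{x,\xi}$ produces the prefactor $b/(2\pi)$ in front of the integral defining $\mathcal{V}$. The replacement is justified by the fact that, as $k\to+\infty$, the Gaussian $\varphi(\cdot - k/b)$ concentrates far from the boundary $x=0$ with boundary trace $O(e^{-ck^2})$; standard resolvent perturbation of $h_D(k)$ against the full-space harmonic oscillator then shows $\psi_D(\cdot;k) - \psi_{D,\infty}(\cdot;k)|_{\re_+}$ is exponentially small in $L^2(\re_+)$, while for $k$ in any compact set $E_D(k)$ stays strictly above $b$ and contributes at most finitely many eigenvalues. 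Feeding the resulting quadratic-form bound $(1-\varepsilon)\mathcal{V} - C_\varepsilon P_\varepsilon \le \Pi_D^* V \Pi_D \le (1+\varepsilon)\mathcal{V} + C_\varepsilon P_\varepsilon$, with $P_\varepsilon$ of finite rank depending on $\varepsilon$, into the counting identity above and applying Weyl's inequality yields \eqref{dj5}. The main obstacle is precisely this conversion from pointwise convergence of the eigenfunctions to a quadratic-form inequality with the multiplicative $(1\pm\varepsilon)$ structure required by \eqref{dj5}: treating the error as a generic compact perturbation would destroy the multiplicative factor and hence the sharp form of the asymptotics, so one must carefully exploit the exponential rate of approximation together with the explicit kernel structure of $\mathcal{V}$.
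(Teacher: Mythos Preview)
Your overall architecture matches the paper's: Birman--Schwinger, projection onto the first band, identification of the limiting object with $\mathcal V$, and Weyl/Ky~Fan to absorb the remainders. The reduction of the complementary part $(I-P_D)$ to an $O_\varepsilon(1)$ term is exactly what the paper does, using that the second band function of $H_{0,D}$ has infimum $3b>b$.

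The gap is at the point you yourself flag as ``the main obstacle''. Saying that $\psi_D(\cdot;k)-\psi_{D,\infty}(\cdot;k)|_{\re_+}$ is ``exponentially small'' is true but not enough: the quantity $E_D(k)-b$ is \emph{also} exponentially small, of the comparable order $k\,e^{-k^2/b}$ (Lemma~3.6 in the paper). After you sandwich with $(E_D-(b-\lambda))^{-1/2}$, the eigenfunction error gets amplified by a factor that itself blows up like $e^{+k^2/(2b)}$, so an unspecified exponential bound on the eigenfunction difference says nothing about the weighted operator. What is actually needed is the \emph{relative} estimate
\[
\bigl\|\pi_{D,\infty}(k)-(\zero\oplus\pi_D(k))\bigr\|
= o\bigl((E_D(k)-b)^{1/2}\bigr),\qquad k\to\infty,
\]
which is the content of the paper's Theorem~3.3. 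Its proof is not a soft perturbation argument: it goes through an explicit rank-one formula for $\tilde h_\infty^{-1}-(\tilde h_-(k)^{-1}\oplus\tilde h_D(k)^{-1})$ in terms of parabolic-cylinder functions, yielding both $\|\Lambda_k\|\sim\tfrac12 k^{-2}$ and the identity $E_D(k)-b=\mathcal E_D^2\|\tilde\pi_{D,\infty}\Lambda_k\tilde\pi_{D,\infty}\|(1+o(1))$, from which the $o((E_D(k)-b)^{1/2})$ bound follows.

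A second, smaller issue: your packaging of the comparison as a quadratic-form inequality $(1-\varepsilon)\mathcal V - C_\varepsilon P_\varepsilon \le \Pi_D^* V\Pi_D \le (1+\varepsilon)\mathcal V + C_\varepsilon P_\varepsilon$ is not the natural consequence of the projection estimate and would need an extra argument. The paper instead compares directly at the level of the ``square-root'' operators $V_*^{1/2}R_D(\lambda;A)$ and $V_*^{1/2}R_{D,\infty}(\lambda;A)$ (which already carry the weight $(E_D(k)-b+\lambda)^{-1/2}$), shows via Theorem~3.3 that for $A$ large their difference has \emph{operator norm} below any prescribed $r>0$, and then invokes Ky~Fan. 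This produces the multiplicative $(1\pm\varepsilon)$ without ever needing a relative form bound between $\Pi_D^* V\Pi_D$ and $\mathcal V$.
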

    The proof of Theorem \ref{thdj1} is contained in Subsection \ref{ss31}.\\

    {\em Remarks}:
    (i) Due to the compactness of the operator ${\mathcal V}$ we have $\sigma_{\rm ess}(E_D  + s{\mathcal V}) = \sigma_{\rm ess}(E_D) = [{\mathcal E}_D, \infty)$ for any $s \in \re$ so that ${\rm Tr}\,\one_{(-\infty, {\mathcal E}_D - \lambda)}(E_D  + s{\mathcal V}) < \infty$ for any $s \in \re$ and $\lambda > 0$. \\
    (ii) The operator $E_D + {\mathcal V}$ is quite similar to the effective Hamiltonians which arose in \cite{bmr} where we studied the asymptotic distribution of the discrete spectrum in the gaps of the essential spectrum of the operator $H_0^{\rm Hall} \pm V$ self-adjoint in $L^2(\rd)$
    with
    $$
    H_0^{\rm Hall} : =
     -\frac{\partial^2}{\partial x^2} +
    \left(-i\frac{\partial}{\partial y} - bx\right)^2 + W(x),
    $$
     and a bounded monotone $W$. There are many other similarities of the spectral properties of the perturbations of  $H_0^{\rm Hall}$ and  $H_{0,D}$, but also there exist several essential differences due mainly to the presence of a boundary in the case of $H_{0,D}$. \\

    In Corollary \ref{fdj1} below we will show that even for a non vanishing identically $V$ that has a compact support, ${\mathcal N}_D(\lambda)$ blows up as $\lambda \downarrow 0$, i.e. the operator $H_D$ has infinitely many eigenvalues. In order to formulate this corollary, we need the following notations. Let ${\Omega} \subset \rd$ be a bounded domain. Denote by ${\bf c}_-(\Omega)$ the maximal length of the vertical segments contained in $\overline{\Omega}$. Further, let $B_R(\zeta) \subset {\mathbb C}$ be a disk of radius $R>0$ centered at $\zeta \in {\mathbb C}$. Identifying ${\mathbb C}$ with $\re^2$, set
    $$
    K(\Omega) : = \left\{(\xi, R) \in \re \times \re_+ \, | \,  \; \mbox{there exists} \; \eta \in \re \; \mbox{such that} \; \Omega \subset B_R(\xi + i \eta)\right\},
    $$
    and
    $$
    {\bf c}_+(\Omega) = \inf_{(\xi, R) \in K(\Omega)} R \varkappa\left(\frac{\xi_+}{eR}\right),
    $$
    where $\xi_+ : = \max\{\xi,0\}$,
    $$
    \varkappa(s) : = \left|\left\{ t > 0 \, | \, t \ln{t} < s\right\}\right|, \quad s \in [0, \infty),
    $$
    and $| \cdot |$ denotes the Lebesgue measure.

    \begin{follow} \label{fdj1}
    Assume that $V$ satisfies
    \bel{dj35}
    c_- \one_{\Omega_-}(x,y) \leq V(x,y) \leq c_+ \one_{\Omega_+}(x,y), \quad (x,y) \in {\mathcal O},
    \ee
    where $\Omega_{\pm} \subset {\mathcal O}$ are bounded domains, and $0 < c_- \leq c_+ < \infty$ are some constants. Then we have
    \bel{dj25}
    {\mathcal C}_-  |\ln{\lambda}|^{1/2}(1 + o(1)) \leq {\mathcal N}_D(\lambda) \leq {\mathcal C}_+  |\ln{\lambda}|^{1/2}(1 + o(1)), \quad \lambda \downarrow 0,
    \ee
    with ${\mathcal C}_- : = (2\pi)^{-1} \sqrt{b} {\bf c}_-(\Omega_-)$ and ${\mathcal C}_+ : = e \sqrt{b} {\bf c}_+(\Omega_+)$.
    In particular,
    $$
    \lim_{\lambda \downarrow 0} \frac{\ln{{\mathcal N}_D(\lambda)}}{\ln{|\ln{\lambda}|}} = \frac{1}{2},
    $$
    and, hence, $H_D$ has infinitely many discrete eigenvalues.
    \end{follow}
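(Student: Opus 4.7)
My plan is to reduce \eqref{dj25} to eigenvalue asymptotics for two explicit model operators (built from the anti-Wick calculus) and then extract those asymptotics directly. From the weak-integral definition, the map $V\mapsto{\mathcal V}$ is operator-monotone, so \eqref{dj35} yields $c_-{\mathcal V}_{\Omega_-}\leq{\mathcal V}\leq c_+{\mathcal V}_{\Omega_+}$, where ${\mathcal V}_\Omega$ denotes the anti-Wick quantization of $\one_\Omega$. Inserting this into the sandwich \eqref{dj5} of Theorem~\ref{thdj1} and invoking the min-max principle reduces the proof of \eqref{dj25} to the two one-sided asymptotics
$$
{\rm Tr}\,\one_{(-\infty,\,b-\lambda)}(E_D-s{\mathcal V}_{\Omega_-})\;\geq\;(2\pi)^{-1}\sqrt{b}\,{\bf c}_-(\Omega_-)\,|\ln\lambda|^{1/2}(1+o(1)),
$$
$$
{\rm Tr}\,\one_{(-\infty,\,b-\lambda)}(E_D-s{\mathcal V}_{\Omega_+})\;\leq\;e\sqrt{b}\,{\bf c}_+(\Omega_+)\,|\ln\lambda|^{1/2}(1+o(1)),
$$
valid for each fixed $s>0$. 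The universal $|\ln\lambda|^{1/2}$ order reflects the exponential behaviour $E_D(k)-b = O(e^{-k^2/b})$ as $k\to+\infty$, so that the threshold momentum $k(\lambda)$ satisfying $E_D(k(\lambda))-b=\lambda$ behaves like $\sqrt{b|\ln\lambda|}$; the $s$-independence of the leading constants reflects the genuinely geometric nature of the limit.

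For the lower bound I fix a vertical segment $\{x_0\}\times[\xi_1,\xi_2]\subset\overline{\Omega_-}$ of maximal length $L={\bf c}_-(\Omega_-)$ and use the coherent states $\Psi_{x,\xi}$ themselves as trial vectors, with $(x,\xi)$ chosen on a Gabor lattice of critical phase-space density $b/(2\pi)$ inside an $O(\sqrt{|\ln\lambda|/b})$-thick strip around the segment. Direct computation of the Berezin transforms gives $\langle E_D\Psi_{x,\xi},\Psi_{x,\xi}\rangle\approx E_D(bx)$ and $\langle{\mathcal V}_{\Omega_-}\Psi_{x,\xi},\Psi_{x,\xi}\rangle\gtrsim e^{-b\,{\rm dist}((x,\xi),\Omega_-)^2/2}$, so that throughout this strip the Rayleigh quotient of each such $\Psi_{x,\xi}$ lies below $b-\lambda$. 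The Gaussian overlaps $|\langle\Psi_{x,\xi},\Psi_{x',\xi'}\rangle|=\exp(-b[(x-x')^2+(\xi-\xi')^2]/4)$, combined with a standard Gram-matrix/Schur-test argument, let me pass from this lattice to an orthonormal family of the same cardinality $(2\pi)^{-1}L\sqrt{b|\ln\lambda|}(1+o(1))$. The min-max principle then closes the lower bound.

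For the upper bound, fix $(\xi,R)\in K(\Omega_+)$ and $\eta\in\re$ with $\Omega_+\subset B_R(\xi+i\eta)$. Then ${\mathcal V}_{\Omega_+}\leq{\mathcal V}_{B_R(\xi+i\eta)}$, and the phase-space translation $(U_{\xi,-\eta}f)(k)=e^{-i\eta k}f(k-b\xi)$ establishes the unitary equivalence $E_D-s{\mathcal V}_{B_R(\xi+i\eta)}\cong \tilde E_D-s{\mathcal V}_{B_R(0)}$ with $\tilde E_D(k):=E_D(k+b\xi)$. On the Bargmann realisation, the Toeplitz operator ${\mathcal V}_{B_R(0)}$ diagonalises in the Hermite basis $\{h_n\}$ with explicit eigenvalues $\mu_n=\gamma(n+1,bR^2/2)/n!$, satisfying the Stirling asymptotics $\mu_n\sim\bigl(ebR^2/(2(n+1))\bigr)^{n+1}/\sqrt{n+1}$. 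A min-max argument reduces the counting of eigenvalues of $\tilde E_D-s{\mathcal V}_{B_R(0)}$ below $b-\lambda$ to counting indices $n$ for which $s\mu_n$ overcomes $\langle(\tilde E_D-b)h_n,h_n\rangle+\lambda$; the latter quantity is exponentially small until the Hermite function $h_n$ (of spread $\sqrt{n/b}$) reaches the singular region $k\approx -b\xi$ of $\tilde E_D$, i.e.\ until $n\gtrsim b^2\xi_+^2$. Setting $t=2(n+1)/(ebR^2)$, the constraint $s\mu_n\gtrsim\lambda$ becomes $t\ln t\lesssim|\ln\lambda|/(bR^2)$, whose solution set has length $\varkappa(\cdot)$; balancing with the threshold $n\lesssim b^2\xi_+^2$ produces a bound of the form $eR\sqrt{b}\,\varkappa(\xi_+/(eR))\,|\ln\lambda|^{1/2}$, and taking the infimum over $(\xi,R)\in K(\Omega_+)$ yields ${\bf c}_+(\Omega_+)$.

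The two assertions at the end of the corollary then follow immediately from the sandwich \eqref{dj25}: the ratio $\ln{\mathcal N}_D(\lambda)/\ln|\ln\lambda|\to 1/2$ because both bounds have the same logarithmic order, and $\sigma_{\rm disc}(H_D)$ is infinite because the lower bound forces ${\mathcal N}_D(\lambda)\to\infty$. The hard part will be the upper bound: since $\tilde E_D$ is not diagonal in the Hermite basis that diagonalises ${\mathcal V}_{B_R(0)}$, rigorously converting the heuristic combined constraint ``$s\mu_n\gtrsim\lambda$ and $n\lesssim b^2\xi_+^2$'' into a true min-max bound requires careful control of the off-diagonal elements of $\tilde E_D$ in the Hermite basis---likely via a Weyl-type calculus on the Bargmann space or a dedicated variational estimate. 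The particular form of $\varkappa$ then emerges naturally from the Stirling asymptotics for $\mu_n$, while the optimization over covering disks is the final geometric step.
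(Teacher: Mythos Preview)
Your approach differs substantially from the paper's. Rather than working with the anti-Wick formulation of Theorem~\ref{thdj1}, the paper passes through the equivalent reformulation Theorem~\ref{thdj3} in terms of the operators $S_D(\lambda;A)$, then (Proposition~\ref{lauth2}, following \cite{bmr} almost verbatim) reduces to explicit integral operators $\Gamma_\delta^\pm(m):L^2(I_\pm)\to L^2(\Omega_\pm)$ with $m=\sqrt{b|\ln\lambda|}$, whose singular-value asymptotics are taken from \cite{bmr}. In particular the upper bound is quoted wholesale from \cite[Subsection~6.2]{bmr}, and the lower bound is obtained via a sinc-kernel operator $g_{\mathcal I}(m)$ whose counting function (\cite[Corollary~6.3]{bmr}) is asymptotically $\frac{L|\mathcal I|}{\pi}m$. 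Your route is more self-contained and geometrically transparent; the paper's buys brevity by leaning on \cite{bmr}.

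That said, your upper bound has a genuine gap you yourself flag. The sentence ``a min-max argument reduces the counting of eigenvalues of $\tilde E_D-s{\mathcal V}_{B_R(0)}$ below $b-\lambda$ to counting indices $n$ for which $s\mu_n$ overcomes $\langle(\tilde E_D-b)h_n,h_n\rangle+\lambda$'' is not a min-max statement: since $\tilde E_D$ is not diagonal in the Hermite basis, the diagonal Rayleigh quotients $\langle(\tilde E_D-b)h_n,h_n\rangle$ give no control of the spectrum from above. An upper bound on the counting function requires exhibiting a subspace of the correct \emph{codimension} on which $\tilde E_D-s{\mathcal V}_{B_R(0)}\geq b-\lambda$, and constructing that subspace needs exactly the off-diagonal control you postpone. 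In \cite{bmr} this step is handled by an entirely different, complex-analytic route on the Bergman-type operators $\Gamma_\delta^+$, and it is not clear a Hermite-basis argument can recover the sharp constant ${\mathcal C}_+$ with its $\varkappa$-dependence. Your lower-bound sketch is closer to workable, but at the critical Gabor density $b/(2\pi)$ the coherent states fail to be a Riesz sequence (Balian--Low), so the Gram-matrix passage to an orthonormal family of the \emph{same} cardinality breaks down; you must work at subcritical density and recover the constant in a limit, and you should also check that the Rayleigh quotients are genuinely below $b-\lambda$ near the inner edge of the strip, where $\langle(E_D-b)\Psi_{x,\xi},\Psi_{x,\xi}\rangle$ is of order one.
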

    {\em Remark}: The constants ${\mathcal C}_{\pm}$ already appeared in \cite[Theorem 6.1]{bmr}. As it is indicated there, we have ${\mathcal C}_- < {\mathcal C}_+$ (in fact, $\frac{{\mathcal C}_+}{{\mathcal C}_-} > e \pi$).

\subsection{Statement of the main results for Neumann Hamiltonians} \label{ss23}
Assume $0 \leq V \in L_0^{\infty}({\mathcal O})$. Introduce the effective potential
    \bel{dj12}
    v(y) : = \int_0^{\infty} V(x,y) \psi_N(x; k_*)^2 dx, \quad y \in \re,
    \ee
    where $\psi_N(\cdot;k)$, $k \in \re$, is the eigenfunction of the operator $h_N(k)$ introduced at the end of Subsection \ref{ss21}.  Set
    \bel{dj75}
    \mu : = \frac{1}{2} E_N''(k_*).
    \ee
    Recall that $\mu > 0$. Then the effective Hamiltonian which governs the asymptotics of ${\mathcal N}_N(\lambda)$ as $\lambda \downarrow 0$ is the self-adjoint operator
    \bel{dj13}
    -\mu \frac{d^2}{dy^2} - v
    \ee
    defined originally on $C_0^{\infty}(\re)$ and then closed in $L^2(\re)$. More precisely, we have the following

    \begin{theorem} \label{thdj2}
    Assume $0 \leq V \in L_0^{\infty}({\mathcal O})$. Then for each $\varepsilon \in (0,1)$ we have
    $$
    {\rm Tr}\,\one_{(-\infty,  - \lambda)}\left(-\mu \frac{d^2}{dy^2}  - (1-\varepsilon)v\right) + O_\varepsilon (1) \leq
    $$
    $$
    {\mathcal N}_N(\lambda) \leq
    $$
    \bel{dj26}
    {\rm Tr}\,\one_{(-\infty,  - \lambda)}\left(-\mu \frac{d^2}{dy^2}  - (1+\varepsilon)v\right) + O_\varepsilon (1), \quad \lambda \downarrow 0.
    \ee
    \end{theorem}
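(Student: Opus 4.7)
The plan is to combine the Birman--Schwinger principle with the fibered structure of $H_{0,N}$ and a Taylor expansion of $E_N(k)$ and $\psi_N(\cdot;k)$ around $k_*$. The structure is parallel to the Dirichlet case, but the reduction produces a one-dimensional Schr\"odinger operator rather than an anti-Wick-type operator, because the infimum of $E_N$ is attained at a unique non-degenerate minimum.

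First, by the Birman--Schwinger principle, for $\lambda > 0$
$$
{\mathcal N}_N(\lambda) \,=\, n_+\bigl(1;\,V^{1/2}(H_{0,N}-{\mathcal E}_N+\lambda)^{-1}V^{1/2}\bigr),
$$
where $n_+(s;T)$ denotes the number of eigenvalues of the compact self-adjoint operator $T$ exceeding $s$. Using the analytic fibering of Subsection \ref{ss21}, I decompose $L^2({\mathcal O}) = P L^2({\mathcal O}) \oplus P_\perp L^2({\mathcal O})$, where $P$ is the first-band projection associated via partial Fourier transform in $y$ to the rank-one spectral projection of $h_N(k)$ on $\psi_N(\cdot;k)$. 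Since the second band function of $H_{0,N}$ is bounded below by ${\mathcal E}_N + \delta$ for some $\delta>0$, the operator $V^{1/2}P_\perp(H_{0,N}-{\mathcal E}_N+\lambda)^{-1}P_\perp V^{1/2}$ is compact and uniformly bounded in $\lambda$. The Weyl-type inequalities $n_+(s_1+s_2;A+B)\le n_+(s_1;A)+n_+(s_2;B)$ then reduce the problem, at the cost of an $O_\varepsilon(1)$ remainder, to estimating the first-band Birman--Schwinger operator.

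Second, via the unitary $({\mathcal U}g)(x,y)=(2\pi)^{-1/2}\int_{\re}e^{iyk}\psi_N(x;k)g(k)\,dk$ from $L^2(\re_k)$ onto $PL^2({\mathcal O})$, the first-band Birman--Schwinger operator has the same non-zero spectrum as the operator on $L^2(\re_k)$ with integral kernel
$$
(E_N(k)-{\mathcal E}_N+\lambda)^{-1/2}\,{\mathcal W}(k,k')\,(E_N(k')-{\mathcal E}_N+\lambda)^{-1/2},
$$
where ${\mathcal W}(k,k') := (2\pi)^{-1}\int_{{\mathcal O}} e^{-iy(k-k')}V(x,y)\psi_N(x;k)\psi_N(x;k')\,dx\,dy$. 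I localize in $k$ to a small neighborhood $I_\eta$ of $k_*$; on $\re\setminus I_\eta$ one has $E_N(k)-{\mathcal E}_N\ge c_\eta>0$, which makes the ``away'' part a uniformly bounded operator contributing $O_\eta(1)$ to $n_+(1;\cdot)$. On $I_\eta$ I apply the Taylor expansion $E_N(k)={\mathcal E}_N+\mu(k-k_*)^2+O((k-k_*)^3)$ and the analytic bound $\psi_N(\cdot;k)=\psi_N(\cdot;k_*)+O(|k-k_*|)$ in $L^\infty(\re_+)$; substituting these replaces ${\mathcal W}(k,k')$ by $(2\pi)^{-1/2}\widehat{v}(k-k')$, so that after the shift $k-k_*\mapsto k$ the operator becomes exactly the Birman--Schwinger operator of $-\mu\,d^2/dy^2 - (1\pm\varepsilon) v$ at spectral parameter $-\lambda$, plus finite-rank corrections. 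Counting its eigenvalues above $1$ gives ${\rm Tr}\,\one_{(-\infty,-\lambda)}(-\mu\,d^2/dy^2-(1\pm\varepsilon)v)$, which yields \eqref{dj26}.

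The main obstacle is to make rigorous the claim that replacing $E_N(k)$ by its quadratic approximation and $\psi_N(\cdot;k)$ by $\psi_N(\cdot;k_*)$ inside $I_\eta$ perturbs $n_+(1;\cdot)$ by only $O_\varepsilon(1)$, uniformly as $\lambda\downarrow 0$. Since the weight $(E_N(k)-{\mathcal E}_N+\lambda)^{-1/2}$ is singular at $k=k_*$, the difference between the exact and the approximating Birman--Schwinger operators is not obviously small in operator norm; the argument must combine Hilbert--Schmidt estimates (using the decay of $V$ at infinity and the analyticity of $\psi_N$ in $k$) with the Ky Fan--Weyl inequalities for $n_+$, absorbing the higher-order terms in the Taylor expansion into the $(1\pm\varepsilon)$ rescaling of $v$ and $\mu$, and the remaining error into a finite-rank perturbation.
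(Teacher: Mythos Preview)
Your approach is essentially identical to the paper's proof in Section~\ref{s4}: Birman--Schwinger, projection onto the first Neumann band localized to a neighbourhood of $k_*$, Taylor expansion of $E_N$ and $\psi_N$ about $k_*$, and identification with the Birman--Schwinger operator of $-\mu\,d^2/dy^2 - v$. One clarification on what you call the ``main obstacle'': the corrections from replacing $E_N(k_*+k)$ by ${\mathcal E}_N+\mu k^2$ and $\psi_N(\cdot;k_*+k)$ by $\psi_N(\cdot;k_*)$ are not finite-rank, nor are they absorbed into the $(1\pm\varepsilon)$ rescaling; rather, the paper observes that both $(E_N(k_*+k)-{\mathcal E}_N+\lambda)^{-1/2}-(\mu k^2+\lambda)^{-1/2}$ and $\bigl(\psi_N(\cdot;k_*+k)-\psi_N(\cdot;k_*)\bigr)(\mu k^2+\lambda)^{-1/2}$ are \emph{bounded} uniformly in $k$ and $\lambda$ (the extra power of $|k|$ in the numerator cancels the singularity), and then splits $V^{1/2}=\one_{B_\rho}V^{1/2}+\one_{{\mathcal O}\setminus B_\rho}V^{1/2}$ so that the first piece of the difference is Hilbert--Schmidt uniformly in $\lambda$ while the second has arbitrarily small operator norm for $\rho$ large (using $V\in L_0^\infty$), whence $n_*(r\varepsilon;\,\text{difference})=O_{\varepsilon}(1)$ and the $(1\pm\varepsilon)$ comes purely from the Ky~Fan splitting.
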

    The  proof of Theorem \ref{thdj2} can be found in Section \ref{s4}.\\

    {\em Remarks}:
    (i) Since $v \in L_0^{\infty}(\re)$, the multiplier by $v$ is an operator relatively compact with respect to $-\mu \frac{d^2}{dy^2}$ in $L^2(\re)$, and  we have $\sigma_{\rm ess}\left(-\mu \frac{d^2}{dy^2}  + sv\right) = [0, \infty)$ for any $s \in \re$, so that ${\rm Tr}\,\one_{(-\infty, - \lambda)}\left(-\mu \frac{d^2}{dy^2}   + sv\right) < \infty$ for any $s \in \re$ and $\lambda > 0$. \\
    (ii) Effective Hamiltonians quite similar to \eqref{dj13} arose in \cite{brs} where, in particular, the asymptotic distribution of the discrete spectrum  of the operator $H_0^{{\rm strip}} - V$ was studied; here the expression for $H_0^{{\rm strip}}$ coincides with \eqref{dj1} but the operator is considered on the strip $(-L,L) \times \re$, with Dirichlet boundary conditions.\\

    In Corollary \ref{fdj2} below we will establish sufficient conditions for the infiniteness and the finiteness of $\sigma_{\rm disc}(H_N)$.
    \begin{follow} \label{fdj2}
   Let $0 \leq V \in L_0^{\infty}({\mathcal O})$. \\
   (i) Assume that there exists $\alpha \in (0,2)$ and constants $\omega_{\pm} \geq 0$ such that
   \bel{dj8}
   \lim_{y \to \pm \infty} |y|^{\alpha} v(y) = \omega_\pm.
   \ee
   Then we have
   \bel{dj10}
   \lim_{\lambda \downarrow 0} \lambda^{\frac{1}{\alpha} - \frac{1}{2}} {\mathcal N}_N(\lambda) = \frac{B\left(\frac{3}{2}, \frac{1}{\alpha} - \frac{1}{2}\right)}{\pi \alpha \sqrt{\mu}} \left(\omega_-^{1/\alpha} + \omega_+^{1/\alpha}\right),
   \ee
   $B$ being the Euler beta function. \\
   (ii) Assume now that \eqref{dj8} holds with $\alpha = 2$. Then we have
   \bel{dj10a}
   \lim_{\lambda \downarrow 0} |\ln{\lambda}|^{-1} {\mathcal N}_N(\lambda) = \frac{1}{2\pi}  \left(\left(\frac{\omega_-}{\mu} - \frac{1}{4}\right)_+^{1/2} + \left(\frac{\omega_+}{\mu} - \frac{1}{4}\right)_+^{1/2}\right).
   \ee
   (iii) Finally, assume  that
   \bel{dj9}
   \limsup_{|y| \to \infty}y^2 v(y) < \frac{\mu}{4}.
   \ee
   Then we have
   \bel{dj10b}
   {\mathcal N}_N(\lambda) = O(1), \quad \lambda \downarrow 0.
   \ee
   \end{follow}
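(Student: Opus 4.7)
The plan is to use Theorem \ref{thdj2} to reduce all three parts to the asymptotic analysis, as $\lambda \downarrow 0$, of
$$
\mathcal{N}(\lambda, s) := {\rm Tr}\,\one_{(-\infty,-\lambda)}\!\left(-\mu \frac{d^2}{dy^2} - s v\right), \qquad s = 1 \pm \varepsilon,
$$
and then to apply the classical theory of one-dimensional Schr\"odinger operators with decaying potentials. Once the leading asymptotics of $\mathcal{N}(\lambda, 1\pm\varepsilon)$ is known, the passage $\varepsilon \downarrow 0$ closes the argument in (i)--(ii) by continuity of the leading coefficient in $s$; in (iii) only the upper bound with $s = 1+\varepsilon$ is needed.

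For part (i), with $|y|^{\alpha}v(y) \to \omega_\pm$ and $\alpha \in (0,2)$, I would invoke the Kirsch--Simon/Birman--Solomjak semiclassical formula
$$
\mathcal{N}(\lambda, 1) = \frac{1}{\pi \sqrt{\mu}} \int_{\re} \sqrt{(v(y) - \lambda)_+}\, dy \, (1 + o(1)), \qquad \lambda \downarrow 0,
$$
valid for potentials with this type of long-range decay. The rescalings $y = (\omega_\pm/\lambda)^{1/\alpha} t$ on each half-line, combined with the elementary identity $\int_0^1 (t^{-\alpha} - 1)^{1/2}\, dt = \alpha^{-1} B\!\left(\tfrac{3}{2}, \tfrac{1}{\alpha} - \tfrac{1}{2}\right)$, convert the integral into the right-hand side of \eqref{dj10}. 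For part (ii), the corresponding Kirsch--Simon threshold result at the critical decay rate $\alpha=2$ gives that the half-line counting function behaves like $\frac{1}{4\pi}(4 c_\pm/\mu - 1)_+^{1/2}|\ln\lambda|$ when $v(y)$ is asymptotic to $c_\pm y^{-2}$ at $\pm\infty$; summing the two half-line contributions and simplifying via $\sqrt{4s-1}_+ = 2(s - 1/4)_+^{1/2}$ produces \eqref{dj10a}.

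For part (iii), the decisive tool is the one-dimensional Hardy inequality
$$
\int_{|y|>R} |u'(y)|^2\, dy \geq \frac{1}{4} \int_{|y|>R} \frac{|u(y)|^2}{y^2}\, dy, \qquad u \in H_0^1(\{|y|>R\}).
$$
Pick $\varepsilon > 0$ so small that $(1+\varepsilon)\limsup_{|y|\to\infty} y^2 v(y) < \mu/4$, and then $R$ so large that $(1+\varepsilon) v(y) \leq (\mu/4)(1-\delta)\, y^{-2}$ on $\{|y|>R\}$ for some $\delta>0$. A Dirichlet bracketing at $y = \pm R$ decouples the operator in \eqref{dj13}, with $v$ replaced by $(1+\varepsilon) v$, into its restrictions to $(-R,R)$ and $\{|y|>R\}$; by Hardy the outer piece is non-negative, while the inner one is a regular Sturm--Liouville operator on a compact interval with bounded potential, so it has only finitely many eigenvalues below $-\lambda$, uniformly in $\lambda>0$. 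Applying the upper bound in Theorem \ref{thdj2} yields \eqref{dj10b}.

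The main obstacle will be case (ii): the prefactor depends nonlinearly on the coupling through the positive-part square root, and one must carefully justify the Kirsch--Simon inverse-square formula for a potential merely asymptotic to $c_\pm/y^2$ rather than equal to it, then check that replacing $\omega_\pm$ by $(1\pm\varepsilon)\omega_\pm$ and sending $\varepsilon \downarrow 0$ recovers the stated constant. The Hardy step (iii) and the standard Weyl calculation (i) are comparatively routine once the reduction via Theorem \ref{thdj2} is in place.
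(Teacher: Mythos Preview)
Your proposal is correct and follows essentially the same route as the paper: reduce via Theorem~\ref{thdj2} to the one-dimensional effective Hamiltonian, then invoke the standard semiclassical counting result (Reed--Simon~XIII.82) for part~(i), the Kirsch--Simon inverse-square threshold result for part~(ii), and the Hardy inequality combined with a Dirichlet bracketing/compact-interval argument for part~(iii). The paper's own proof is in fact less detailed than yours---it simply cites these references and the Hardy inequality without spelling out the $\varepsilon \downarrow 0$ passage or the Beta-function computation---so your write-up would serve as a fuller version of the same argument.
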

   {\em Remarks}: (i) If at least one of the constants $\omega_{\pm}$ in \eqref{dj8} with $\alpha \in (0,2)$ is  positive, then \eqref{dj10} implies that the operator $H_N$ has infinitely many discrete eigenvalues. Similarly, if at least one of the constants $\omega_{\pm}$ in \eqref{dj8} with $\alpha = 2$ is greater than $\mu/4$, then \eqref{dj10a} again implies that $\sigma_{\rm disc}(H_N)$ is infinite. Finally, \eqref{dj10b} shows that under  assumption \eqref{dj9}, the operator $H_N$ has at most finitely many discrete eigenvalues. Note that the estimate
   $$
   \limsup_{|y| \to \infty}y^2 \sup_{x \in \re_+} V(x,y) < \frac{\mu}{4}
   $$
   evidently implies \eqref{dj9}. \\
   (ii) Relation \eqref{dj10} can be written in a semiclassical form, namely
   $$
    {\mathcal N}_N(\lambda) = \frac{1}{2\pi} \left|\left\{(y,\eta) \in T^*\re \, | \, \mu \eta^2 - v(y) < - \lambda\right\}\right|(1 + o(1)), \quad \lambda \downarrow 0.
    $$
Corollary \ref{fdj2} follows immediately from Theorem \ref{thdj2} and some well known results on the asymptotics of the discrete spectrum of 1D Schr\"odinger operators with decaying potentials which allow us to investigate the behaviour of ${\rm Tr}\,\one_{(-\infty, - \lambda)}\left(-\mu \frac{d^2}{dy^2} -  sv\right)$, $s>0$, as $\lambda \downarrow 0$. In particular,
    the first part of the corollary is quite close to \cite[Theorem XIII.82]{rs4},
the proof of its second part can be easily deduced from  \cite{ks}, while the
third part follows from the Hardy inequality
$$
\int_0^{\infty} |u'(y)|^2 dy \geq \frac{1}{4} \int_0^{\infty} y^{-2} |u(y)|^2 dy, \quad u \in C_0^{\infty}(\re_+),
$$
and the result of \cite[Problem 22, Chapter XIII]{rs4}.

\section{Proofs of the main results for Dirichlet \\ Hamiltonians} \label{s3}
\subsection{Proof of Theorem 2.1}

Since the proof of Theorem \ref{thdj1} is somewhat lengthy, we will divide the exposition into several parts.\\
(i) {\em Auxiliary  results.}
Let $s>0$ and $T=T^*$ be a linear compact operator acting in a
given Hilbert space\footnote{All Hilbert spaces in this article
are supposed to be separable.}. Set
 $$
n_{\pm}(s; T) : = {\rm Tr}\,\one_{(s,\infty)}(\pm T);
$$
thus the functions $n_{\pm}(\cdot; T)$ are respectively the
counting functions of the positive and negative eigenvalues of the
operator $T$. If $T$ is compact but not necessarily
self-adjoint (in particular, $T$ could act between two different
Hilbert spaces), we will use also the notation
$$
n_*(s; T) : = n_+(s^2; T^* T), \quad s> 0;
$$
thus  $n_{*}(\cdot; T)$ is the counting function of the singular
values of $T$. Evidently,
$$
n_*(s; T) = n_*(s; T^*), \quad n_+(s; T^*T) = n_+(s; T T^*), \quad s>0,
$$
and
$$
n_{\pm}(s; T) \leq n_*(s; T), \quad s > 0, \quad T = T^*.
$$
 Let us recall also the well-known Weyl inequalities
    \bel{lau11}
    n_+(s_1 + s_2; T_1 + T_2) \leq n_+(s_1; T_1) + n_+(s_2; T_2)
    \ee
    where $s_j > 0$ and $T_j$, $j=1,2$, are
linear self-adjoint compact operators (see e.g. \cite[Theorem 9.2.9]{birsol}, as well as the Ky Fan inequalities
    \bel{lau13}
    n_*(s_1 + s_2; T_1 + T_2) \leq n_*(s_1; T_1) + n_*(s_2;
T_2), \quad s_1, s_2 > 0,
    \ee
    for compact but not necessarily self-adjoint $T_j$, $j=1,2$, (see e.g. \cite[Subsection 11.1.3]{birsol}).
    Further, let $S_p$, $p \in [1,\infty)$, be the Schatten -- von Neumann class of compact operators, equipped with the norm
    $$
    \| T \|_p : = \left( -\int_0^{\infty} s^p dn_*(s; T) \right)^{1/p}.
    $$
    Then the  Chebyshev-type estimate
    \bel{dj36}
    n_*(s; T) \leq s^{-p} \|T\|_p^p
    \ee
    holds true for any $s > 0$ and $p \in [1, \infty)$.
    Finally, we recall an abstract version of the Birman-Schwinger principle,  suitable for our purposes. Let $T = T^*$ be lower bounded, and ${\mathcal T} : = \inf{\sigma(T)}$. Let $Q \geq 0$ be a self-adjoint operator,  relatively compact in the sense of the quadratic forms with respect to $T$. Then we have
    $$
    {\rm Tr}\,\one_{(-\infty, {\mathcal T} - \lambda)}(T - sQ) =
    $$
    $$
    n_+(s^{-1}; (T-{\mathcal T} + \lambda)^{-1/2} Q (T-{\mathcal T} + \lambda)^{-1/2}) =
    n_*(s^{-1/2} ; Q^{1/2} (T-{\mathcal T} + \lambda)^{-1/2})
    $$
    for any $s>0$ and $\lambda > 0$ (see \cite[Lemma 1.1]{bir}). \\

(ii) {\em An alternative formulation of Theorem \ref{thdj1}}.
    Instead of Theorem \ref{thdj1} we will prove Theorem \ref{thdj3} below which is slightly more general, and more convenient both to prove and apply.
    In order to formulate it, we need the following notations.  For $(x,y) \in \rd$ set
    $$
    V_*(x,y) = \left\{
    \begin{array} {l}
    V(x,y) \quad \mbox{if} \quad x > 0, \\
    0 \quad \mbox{if} \quad x \leq 0.
    \end{array}
    \right.
    $$
    For $\lambda > 0$ and $A \in [-\infty, \infty)$ define $S_D(\lambda; A) : L^2(A, \infty) \to L^2(\rd)$ as the operator with integral kernel
    $$
    (2\pi)^{-1/2} V_*(x,y)^{1/2} e^{iky} \psi_{D,\infty}(x; k) (E_D(k) - {\mathcal E}_D + \lambda)^{-1/2}, \quad k \in (A,\infty), \quad (x,y) \in \rd,
    $$
    the function $\psi_{D,\infty}$ being defined in \eqref{sof9}.
    \begin{theorem} \label{thdj3}
    Let $0 \leq V \in L_0^{\infty}({\mathcal O})$. Then for any $A \in [-\infty, \infty)$ and $\varepsilon \in (0,1)$, we have
    \bel{dj28}
    n_*(1 + \varepsilon; S_D(\lambda; A)) + O_{\varepsilon, A}(1) \leq {\mathcal N}_D(\lambda) \leq   n_*(1 - \varepsilon; S_D(\lambda; A)) + O_{\varepsilon, A}(1), \quad \lambda \downarrow 0.
    \ee
    \end{theorem}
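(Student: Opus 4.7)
\textbf{Proof plan for Theorem \ref{thdj3}.}

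The plan is to apply the Birman--Schwinger principle from part (i) and then pass, in two stages, to an operator whose singular values coincide with those of $S_D(\lambda; A)$. First I would invoke Birman--Schwinger together with the identity $n_+(s^2; TT^*) = n_*(s; T)$ to write $\mathcal{N}_D(\lambda) = n_+(1; B(\lambda))$, where $B(\lambda) := V_*^{1/2}(H_{0,D} - \mathcal{E}_D + \lambda)^{-1} V_*^{1/2}$. The strategy is then to produce a splitting $B(\lambda) = S_D(\lambda; A) S_D(\lambda; A)^* + R(\lambda)$ with $n_+(\varepsilon; R(\lambda)) = O_\varepsilon(1)$; Weyl's inequality \eqref{lau11} combined with the identity above (after the cosmetic reparametrization $(1 \mp \varepsilon')^{1/2} = 1 \mp \varepsilon$) then delivers \eqref{dj28}.

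The first stage is a band reduction. Using the analytic fibering $\mathcal{F} H_{0,D} \mathcal{F}^* = \int_\re^{\oplus} h_D(k)\, dk$ from Subsection \ref{ss21}, let $P$ be the spectral projection of $H_{0,D}$ onto its first band, which in the fiber becomes the rank-one projection $|\psi_D(\cdot;k)\rangle\langle \psi_D(\cdot;k)|$. Since $\mathcal{E}_D = b$ is the infimum of only the first band, the second fiber eigenvalue $E_{D,2}(k)$ of $h_D(k)$ stays at a positive distance from $b$ uniformly in $k \in \re$, hence $V_*^{1/2}(H_{0,D} - \mathcal{E}_D + \lambda)^{-1}(I-P) V_*^{1/2}$ is uniformly bounded as $\lambda \downarrow 0$ and, thanks to $V \in L_0^\infty$, converges in norm to a compact operator. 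Weyl's inequality absorbs its contribution to $n_+(1; B(\lambda))$ into $O_\varepsilon(1)$, and the first-band piece $V_*^{1/2}(H_{0,D} - \mathcal{E}_D + \lambda)^{-1} P\, V_*^{1/2}$ is readily identified, via the fiber spectral decomposition, with $\tilde{S}_D(\lambda; -\infty) \tilde{S}_D(\lambda; -\infty)^*$, where $\tilde{S}_D$ is defined exactly like $S_D$ except that $\psi_{D,\infty}(\cdot;k)$ is replaced by the true eigenfunction $\psi_D(\cdot;k)$ and $A$ is set to $-\infty$.

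The second stage replaces $\tilde{S}_D$ by $S_D$ on the fiber. I would split the $k$-variable at $A$ and write $\tilde{S}_D(\lambda; -\infty) = \tilde{S}_D(\lambda; A) \oplus R_1(\lambda)$ with $R_1(\lambda)$ supported on $k \leq A$. Because $E_D$ is strictly decreasing, $(E_D(k) - b + \lambda)^{-1/2}$ is uniformly bounded on $(-\infty, A]$, so $R_1(\lambda)$ lies in some Schatten class $S_p$ with norm uniform in $\lambda$, and the Chebyshev bound \eqref{dj36} gives $n_*(\varepsilon; R_1(\lambda)) = O_\varepsilon(1)$. It then remains to replace $\psi_D$ by $\psi_{D,\infty}$ on $k > A$: here the true Dirichlet ground state $\psi_D(\cdot;k)$ is concentrated at $x = k/b \to +\infty$, so the boundary-forcing correction $\psi_D(x;k) - \psi_{D,\infty}(x;k)$ is Gaussian-small (of order $e^{-k^2/(2b)}$) as $k \to +\infty$, and a Schatten estimate analogous to the one for $R_1$ turns this replacement into another $O_\varepsilon(1)$ error. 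Combining both stages through Weyl \eqref{lau11} and Ky Fan \eqref{lau13} inequalities yields \eqref{dj28}.

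The hardest part will be the quantitative pairing, for $k \to +\infty$, of the Gaussian-small difference $\psi_D - \psi_{D,\infty}$ against the blowing-up denominator $(E_D(k)-b+\lambda)^{-1/2}$: both quantities are controlled by essentially the same Gaussian scale, so their pointwise ratio is only bounded rather than small, and converting this pointwise bound into a Schatten-norm bound forces one to exploit the decay of $V$ at infinity in a careful way. The most convenient route is probably to approximate $V$ in $L^\infty$-norm by a compactly supported potential, handle the compactly supported piece via a Hilbert--Schmidt bound, and control the remaining $L^\infty$-small tail using the operator inequality $V_*^{1/2} T V_*^{1/2} \leq \|V\|_\infty T$ for $T \geq 0$.
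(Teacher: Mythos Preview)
Your overall architecture---Birman--Schwinger, reduction to the first band, truncation of the fibre variable at $A$, then replacement of $\psi_D$ by $\psi_{D,\infty}$---is exactly the paper's. The gap is in the last step, and it is precisely where you locate the difficulty.

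You write that the ratio $(E_D(k)-{\mathcal E}_D)^{-1/2}\|\psi_D(\cdot;k)-\psi_{D,\infty}(\cdot;k)\|$ is ``only bounded rather than small'', and propose to compensate by splitting $V$ into a compactly supported part (handled by a Hilbert--Schmidt bound) and an $L^\infty$-small tail. But under mere boundedness this Hilbert--Schmidt bound \emph{diverges}: for compactly supported $V$ one gets
\[
\|\tilde S_D(\lambda;A)-S_D(\lambda;A)\|_2^2
\;\asymp\;\int_A^\infty (E_D(k)-{\mathcal E}_D+\lambda)^{-1}\!\int_{{\rm supp}\,V} V(x,y)\,|\psi_D(x;k)-\psi_{D,\infty}(x;k)|^2\,dx\,dy\,dk,
\]
and near $x=0$ the difference equals $-\psi_{D,\infty}(0;k)\sim e^{-k^2/(2b)}$ (Dirichlet forces $\psi_D(0;k)=0$), while $(E_D(k)-{\mathcal E}_D)^{-1}\sim k^{-1}e^{k^2/b}$ by the asymptotics \eqref{asymband}. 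The integrand in $k$ is therefore at best of order $k^{-1}$, and the $k$-integral over $(A,\infty)$ diverges. So boundedness of the ratio is not enough, and your proposed route does not close.

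What the paper proves instead is that the ratio actually \emph{tends to zero}: this is Theorem~\ref{thdj4}, and it is the heart of the argument. The proof writes the difference of the half-line and full-line resolvents as a rank-one operator $\Lambda_k$ (explicit in terms of parabolic-cylinder functions), shows $\|\Lambda_k\|\sim \tfrac12 k^{-2}$ (Proposition~\ref{le1}), and then, via a resolvent-identity manipulation (Proposition~\ref{pr1}) combined with the identification $E_D(k)-{\mathcal E}_D\sim{\mathcal E}_D^2\|\tilde\pi_{D,\infty}\Lambda_k\tilde\pi_{D,\infty}\|$ (Proposition~\ref{pr2}), obtains
\[
(E_D(k)-{\mathcal E}_D)^{-1/2}\|\pi_{D,\infty}(k)-(\zero\oplus\pi_D(k))\|\;\le\;C\,\|\Lambda_k\|^{1/2}\;\to 0.
\]
Once this vanishing is in hand, the replacement step is trivial: for $\tilde A$ large the \emph{operator norm} of $V_*^{1/2}(R_{D,\infty}(\lambda,\tilde A)-R_D(\lambda,\tilde A))$ is below any prescribed $r>0$ uniformly in $\lambda$ (estimate \eqref{lau34}), so $n_*(r;\cdot)=0$ exactly. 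No Schatten estimate and no decay of $V$ beyond $V\in L^\infty$ are needed at this stage.
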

    Applying the Birman-Schwinger principle, we easily find that
    $$
    n_*(s; S_D(\lambda; -\infty)) = {\rm Tr}\,\one_{(-\infty, {\mathcal E}_D - \lambda)}(E_D - s^{-2} {\mathcal V}), \quad s>0,
    $$
    so that \eqref{dj28} with $A = -\infty$ is equivalent to \eqref{dj5}. The proof of Theorem \ref{thdj3} is contained in the remaining several parts of this subsection.\\

    (iii) {\em Spectral localization to a neighbourhood of ${\mathcal E}_D$}. For $k \in \re$ define the orthogonal projection $\pi_D(k) : L^2(\re_+) \to L^2(\re_+)$ by
    \bel{dj37}
    \pi_D(k) : = |\psi_D(\cdot;k)\rangle \langle \psi_D(\cdot;k) |,
    \ee
    the function $\psi_D(\cdot;k)$ being defined at the end of Subsection \ref{ss21}. Using the  decomposition $L^2(\re) = L^2(-\infty,0) \oplus L^2(0,\infty)$, introduce the orthogonal projection $\zero \oplus \pi_D(k)$ where $\zero$ is the zero operator in $L^2(-\infty,0)$. For $\lambda > 0$ and $A \in [-\infty, \infty)$ define the operator
    \bel{sof1}
R_D(\lambda, A):= {\mathcal F}^* \int^\oplus_{(A,\infty)} (E_D(k)-{\mathcal E}_D + \lambda)^{-1/2}(\zero \oplus \pi_D(k))\,dk\,{\mathcal F},
    \ee
    self-adjoint in $L^2(\rd)$.
\begin{pr}\label{prdj1}
Assume that $0 \leq V \in L_0^{\infty}({\mathcal O})$. Then for any $A \in [-\infty, \infty)$ and $\varepsilon \in (0,1)$ we have
\bel{dj29}
     n_*(1; V_*^{1/2} R_D(\lambda; A)) \, \leq \, {\mathcal N}_D(\lambda) \, \leq \,   n_*(1 - \varepsilon; V_*^{1/2} R_D(\lambda; A)) + O_{\varepsilon, A}(1), \quad \lambda \downarrow 0.
    \ee
\end{pr}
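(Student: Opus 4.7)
The plan is to combine the Birman--Schwinger principle stated in part~(i) above with the analytic fibering of $H_{0,D}$ recalled in Subsection~\ref{ss21}, so as to reduce $\mathcal{N}_D(\lambda)$ to a singular-value counting function of an operator which only sees the lowest band $E_D(k)$ for $k\in(A,\infty)$. Applying Birman--Schwinger with $T=H_{0,D}$, $\mathcal{T}=\mathcal{E}_D$, $Q=V_*$ and $s=1$ first yields
$$
\mathcal{N}_D(\lambda) \;=\; n_+\bigl(1;\,V_*^{1/2}(H_{0,D}-\mathcal{E}_D+\lambda)^{-1} V_*^{1/2}\bigr).
$$
Then, using the Fourier decomposition $\mathcal{F}(H_{0,D}-\mathcal{E}_D+\lambda)^{-1}\mathcal{F}^* = \int^{\oplus}(h_D(k)-\mathcal{E}_D+\lambda)^{-1}\,dk$ together with the fiberwise spectral resolution of $h_D(k)$ based on the projector $\pi_D(k)$ in~\eqref{dj37}, I would split the resolvent as
$$
(H_{0,D}-\mathcal{E}_D+\lambda)^{-1} \;=\; R_D(\lambda;A)^2 \,+\, \mathcal{Q}(\lambda;A),
$$
where the positive remainder $\mathcal{Q}(\lambda;A)\geq 0$ collects the first-band contribution for $k\leq A$ together with the contribution of all higher eigenvalues of $h_D(k)$.

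The lower bound in~\eqref{dj29} is then immediate: from $\mathcal{Q}(\lambda;A)\geq 0$ and the mini--max principle,
$$
\mathcal{N}_D(\lambda) \;\geq\; n_+\bigl(1;\,V_*^{1/2} R_D(\lambda;A)^2 V_*^{1/2}\bigr) \;=\; n_*\bigl(1;\,V_*^{1/2} R_D(\lambda;A)\bigr).
$$
For the upper bound I would apply the Weyl inequality~\eqref{lau11} with thresholds $(1-\varepsilon)^2$ and $1-(1-\varepsilon)^2$ to obtain
$$
\mathcal{N}_D(\lambda) \;\leq\; n_*\bigl(1-\varepsilon;\,V_*^{1/2} R_D(\lambda;A)\bigr) \,+\, n_+\bigl(1-(1-\varepsilon)^2;\,V_*^{1/2}\mathcal{Q}(\lambda;A)V_*^{1/2}\bigr).
$$

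The remaining task is to show that the second term is bounded as $\lambda\downarrow 0$, uniformly in $\lambda$. The key input is an operator estimate $\mathcal{Q}(\lambda;A)\leq C_A(H_{0,D}+1)^{-1}$, valid for every $\lambda\geq 0$. This rests on two ingredients: the strict decrease of $E_D$, which gives $E_D(k)-\mathcal{E}_D\geq E_D(A)-\mathcal{E}_D>0$ for $k\leq A$ and hence controls the low-$k$ first-band part, and a uniform spectral gap $\inf_{k\in\re} E_{D,1}(k) > \mathcal{E}_D$ for the second eigenvalue $E_{D,1}(k)$ of $h_D(k)$, which I would extract from the Dirichlet--Hermite spectrum $\{3b,7b,\ldots\}$ at $k=0$, the Landau-level asymptotics as $k\to+\infty$, and the $k^2$ growth as $k\to-\infty$. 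Once this bound is in place, the relative compactness of $V$ with respect to $H_{0,D}$ (the diamagnetic inequality already invoked in the introduction) guarantees that $V_*^{1/2}(H_{0,D}+1)^{-1}V_*^{1/2}$ is compact, so that $n_+\bigl(1-(1-\varepsilon)^2;\,V_*^{1/2}\mathcal{Q}(\lambda;A)V_*^{1/2}\bigr)$ is finite and controlled independently of $\lambda$. The main obstacle is precisely this uniform band gap together with the accompanying domination estimate; once they are secured, the Weyl and mini--max inequalities close the argument routinely.
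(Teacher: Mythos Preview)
Your approach is essentially the paper's: Birman--Schwinger, then peel off the first-band contribution for $k>A$ via mini--max and Weyl, and bound the remainder using the spectral gap above the first band (the paper simply cites that the infimum of the second Dirichlet band function equals $3b$, and then notes that the remainder compact operator admits a norm limit as $\lambda\downarrow 0$, which is equivalent to your domination by $C_A(H_{0,D}+1)^{-1}$).

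One technical wrinkle to clean up: $H_{0,D}$ and its resolvent act on $L^2(\mathcal{O})$, whereas $R_D(\lambda;A)$ is built from $\zero\oplus\pi_D(k)$ and acts on $L^2(\mathbb{R}^2)$, so the decomposition $(H_{0,D}-\mathcal{E}_D+\lambda)^{-1}=R_D(\lambda;A)^2+\mathcal{Q}(\lambda;A)$ is not literally well-posed as an operator identity. The paper deals with this by first working with the analogue $\mathcal{R}_D(\lambda;A)$ on $L^2(\mathcal{O})$ and then observing that $\mathcal{R}_D(\lambda;A)V^{1/2}$ and $R_D(\lambda;A)V_*^{1/2}$ have the same nonzero singular values; you should insert the same step.
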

\begin{proof}
By the Birman - Schwinger principle,
    \bel{dj30}
    {\mathcal N}_D(\lambda) = n_+(1; V^{1/2} (H_{0,D} - {\mathcal E}_D + \lambda)^{-1} V^{1/2}), \quad \lambda > 0.
    \ee
    Fix $A \in [-\infty, \infty)$ and define the orthogonal projection $P_D(A) : L^2({\mathcal O}) \to L^2({\mathcal O})$ by
    $$
    P_D(A) : = {\mathcal F}^* \int^\oplus_{(A,\infty)}  \pi_D(k) \,dk {\mathcal F}.
    $$
    Then the mini-max principle and the Weyl inequalities imply that the estimates
    $$
    n_+(1; V^{1/2} (H_{0,D} - {\mathcal E}_D + \lambda)^{-1} P_D(A) V^{1/2}) \leq n_+(1; V^{1/2} (H_{0,D} - {\mathcal E}_D + \lambda)^{-1} V^{1/2}) \leq
    $$
    \bel{dj31}
    n_+(1 - \varepsilon; V^{1/2} (H_{0,D} - {\mathcal E}_D + \lambda)^{-1} P_D(A) V^{1/2}) +
    n_+(\varepsilon ; V^{1/2} (H_{0,D} - {\mathcal E}_D + \lambda)^{-1} (I-P_D(A)) V^{1/2})
    \ee
    hold true for any $\lambda > 0$ and $\varepsilon \in (0,1)$. It is well known that the infimum of the second band function of $H_{0,D}$ is equal to $3b > {\mathcal E}_D$ (see e.g. \cite{bp}). Hence, for any $A \in [-\infty, \infty)$ we have
    $$
    \inf{\sigma\left({H_{0,D}}_{|(I-P_D(A)) {\rm Dom}(H_{0,D})}\right)} > {\mathcal E}_D
    $$
    and the compact operator $V^{1/2} (H_{0,D} - {\mathcal E}_D + \lambda)^{-1} (I-P_D(A)) V^{1/2}$ admits a norm limit as $\lambda \downarrow 0$. Therefore,
    \bel{dj31a}
     n_+(\varepsilon; V^{1/2} (H_{0,D} - {\mathcal E}_D + \lambda)^{-1} (I-P_D(A)) V^{1/2}) = O_{\varepsilon, A}(1), \quad \lambda \downarrow 0,
    \ee
    for any $\varepsilon > 0$. Further, for $A \in [-\infty, \infty)$, $\lambda > 0$, introduce the operator
    $$
   {\mathcal R}_D(\lambda, A):= {\mathcal F}^* \int^\oplus_{(A,\infty)} (E_D(k)-{\mathcal E}_D + \lambda)^{-1/2}  \pi_D(k)\,dk\,{\mathcal F},
    $$
    self-adjoint in $L^2({\mathcal O})$. Then we have
     \bel{dj32}
     n_+(s^2; V^{1/2} (H_{0,D} - {\mathcal E}_D + \lambda)^{-1} P_D(A) V^{1/2}) = n_*(s;   {\mathcal R}_D(\lambda; A) V^{1/2}), \quad s>0.
     \ee
     Finally, it is easy to see that the operators $ {\mathcal R}_D(\lambda; A) V^{1/2}$ and $R_D(\lambda; A) V_*^{1/2}$ have the same non zero singular values. Therefore,
     \bel{dj33}
     n_*(s;  {\mathcal R}_D(\lambda, A) V^{1/2}) = n_*(s; R_D(\lambda; A) V_*^{1/2}) = n_*(s; V_*^{1/2} R_D(\lambda; A)), \quad s>0.
     \ee
     Putting together \eqref{dj30} -- \eqref{dj33}, we obtain \eqref{dj29}.
     \end{proof}
     (iv) {\em Asymptotic estimate of $\zero \oplus \pi_D(k)$ as $k \to \infty$.} For $k \in \re$ define the orthogonal projection $\pi_{D,\infty}(k) : L^2(\re) \to L^2(\re)$ by
     \bel{dj38}
     \pi_{D,\infty}(k) : = |\psi_{D, \infty}(\cdot; k)\rangle \langle \psi_{D, \infty}(\cdot; k)|,
     \ee
     the function $\psi_{D, \infty}(\cdot; k)$ being defined in \eqref{sof9}.

     \begin{theorem} \label{thdj4}
     We have
     \bel{dj34}
     \lim_{k \to \infty}\left(E_D(k) - {\mathcal E}_D\right)^{-1/2} \|\pi_{D, \infty}(k) - (\zero \oplus \pi_D(k))\| = 0.
     \ee
     \end{theorem}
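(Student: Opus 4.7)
The plan is to reduce the operator-norm estimate to a scalar inner-product estimate, and then split the resulting quantity into a ``left tail'' and an ``interior residual'' piece, each estimated in isolation. Since $\pi_{D,\infty}(k)$ and $\zero\oplus\pi_D(k)$ are both rank-one orthogonal projections in $L^2(\re)$, onto the unit vectors $u_k := \psi_{D,\infty}(\cdot;k)$ and $v_k := 0\oplus\psi_D(\cdot;k)$ respectively, the standard identity for rank-one orthogonal projections gives
\[
\|\pi_{D,\infty}(k)-(\zero\oplus\pi_D(k))\|^2 \;=\; 1-\langle u_k,v_k\rangle^2.
\]
Writing $P_+u_k = c_k\psi_D(\cdot;k)+r_k$ in $L^2(\re_+)$, with $c_k=\langle u_k,v_k\rangle$ and $r_k\perp\psi_D(\cdot;k)$, orthogonality decomposes the preceding identity as
\[
\|\pi_{D,\infty}(k)-(\zero\oplus\pi_D(k))\|^2 \;=\; \|P_-u_k\|^2 + \|r_k\|^2,
\]
where $P_\pm:L^2(\re)\to L^2(\re_\pm)$ are the restrictions. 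It suffices to show each term is $o(E_D(k)-b)$ as $k\to\infty$.

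The first term is controlled by an explicit Gaussian tail: $\|P_-u_k\|^2 = \tfrac12\operatorname{erfc}(k/\sqrt{b}) = O(e^{-k^2/b}/k)$. To compare with $E_D(k)-b$, one establishes the two-sided asymptotic $E_D(k)-b \asymp k\,e^{-k^2/b}$. The upper bound follows from the min-max principle applied to the trial function $\phi_k:=(u_k-u_{-k})|_{\re_+}\in H_0^1(\re_+)$, whose Rayleigh quotient one computes via the pointwise identity $[-\partial_x^2+(bx-k)^2-b](u_k-u_{-k}) = 4bk\,x\,u_{-k}$ together with elementary Gaussian integrations; the matching lower bound comes either from the large-argument asymptotics of the parabolic cylinder functions representing $\psi_D(\cdot;k)$, or from an Agmon-type exponential decay estimate. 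Hence $\|P_-u_k\|^2/(E_D(k)-b)=O(1/k^2)\to 0$.

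For the residual $\|r_k\|^2$, the essential ingredient is the Green-type identity
\[
(E_D(k)-b)\,c_k \;=\; u_k(0)\,\psi_D'(0;k),
\]
obtained by integrating on $\re_+$ the pointwise relation $\psi_D\cdot(h_\infty u_k) - u_k\cdot(h_\infty \psi_D) = (b-E_D(k))\,\psi_D u_k$ (where $h_\infty := -\partial_x^2+(bx-k)^2$, so that $h_\infty u_k = b u_k$ and $h_\infty\psi_D = E_D(k)\psi_D$) and using $\psi_D(0;k)=0$ in the resulting boundary term. It also implies that $r_k := P_+u_k - c_k\psi_D$ solves $(h_\infty-b)r_k = -c_k(E_D(k)-b)\psi_D$ on $\re_+$ with $r_k(0)=u_k(0)$ and $r_k\in L^2(\re_+)\cap\psi_D^{\perp}$. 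In the classically forbidden region $x\in[0,k/(2b)]$, $u_k(x)\sim u_k(0)e^{kx}$ and $\psi_D(x;k)\sim 2u_k(0)\sinh(kx)$ at leading order, so $r_k(x)\sim u_k(0)e^{-kx}$ and $\|r_k|_{[0,k/(2b)]}\|^2=O(u_k(0)^2/k)=O(e^{-k^2/b}/k)$; in the classically allowed region around $x=k/b$, $\psi_D$ is close to the normalized restriction $P_+u_k/\|P_+u_k\|$, so the contribution of $r_k$ there is of the same order or smaller. The main obstacle is the rigorous justification of the matched-asymptotic analysis of $\psi_D(\cdot;k)$ across the turning point; this can be handled either by the explicit parabolic cylinder function representation of $\psi_D$, or by combining an Agmon exponential-decay estimate for $\psi_D$ in the forbidden region with the Green identity above. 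Either route yields $\|r_k\|^2=O(e^{-k^2/b}/k)=O((E_D(k)-b)/k^2)=o(E_D(k)-b)$, which completes the proof.
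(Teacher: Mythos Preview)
Your route is genuinely different from the paper's. The paper never decomposes $1-\langle u_k,v_k\rangle^2$ into a tail plus a residual. Instead, after the unitary shift $\tau_k$, it introduces the rank-one operator
\[
\Lambda_k \;=\; \tilde{h}_\infty^{-1}-\bigl(\tilde{h}_-(k)^{-1}\oplus\tilde{h}_D(k)^{-1}\bigr),
\]
whose kernel is written \emph{explicitly} in terms of parabolic cylinder functions. From resolvent algebra alone it obtains
\[
\|\tilde{\pi}_{D,\infty}-(\tilde{\zero}\oplus\tilde{\pi}_D(k))\|\leq C\,\|\tilde{\pi}_{D,\infty}\Lambda_k\|,
\qquad
E_D(k)-\mathcal{E}_D=\mathcal{E}_D^2\,\|\tilde{\pi}_{D,\infty}\Lambda_k\tilde{\pi}_{D,\infty}\|\,(1+o(1)),
\]
and then the elementary inequality $\|\tilde{\pi}_{D,\infty}\Lambda_k\|/\|\tilde{\pi}_{D,\infty}\Lambda_k^{1/2}\|\leq\|\Lambda_k\|^{1/2}\to 0$ finishes. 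The point is that both the projection gap and the eigenvalue gap are expressed through the \emph{same} object $\Lambda_k$, so no separate eigenfunction asymptotics for $\psi_D$ is ever needed.

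Your argument, by contrast, \emph{does} need such asymptotics, and this is where it has a real gap. The treatment of $\|r_k\|^2$ is not a proof. First, the claim that in $[0,k/(2b)]$ one has $r_k(x)\sim u_k(0)e^{-kx}$ presupposes that the growing WKB modes of $u_k$ and $c_k\psi_D$ cancel to leading order; that cancellation is equivalent to knowing the \emph{exact constant} in $E_D(k)-b\sim C_0\,k\,e^{-k^2/b}$ (via your own Green identity $\psi_D'(0;k)=(E_D(k)-b)c_k/u_k(0)$), not merely the two-sided order $\asymp$ you establish. Second, the allowed-region step (``$\psi_D$ is close to $P_+u_k/\|P_+u_k\|$, so the contribution of $r_k$ there is of the same order or smaller'') is circular as stated: $L^2$-closeness of $\psi_D$ to the normalized restriction is precisely the smallness of $\|r_k\|$ you are trying to prove. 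You acknowledge the matched-asymptotics obstacle and defer to ``the explicit parabolic cylinder function representation of $\psi_D$'' or ``an Agmon-type estimate'', but neither is carried out; once you do invoke the parabolic cylinder machinery, you are essentially redoing the paper's computation of $\|\Lambda_k\|$ in disguise, only with more bookkeeping (two regions, a turning-point matching, and a separate $E_D-b$ lower bound). The paper's $\Lambda_k$ device buys exactly the avoidance of this matching analysis.
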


Theorem \ref{thdj4} could be regarded as the central ingredient in the proof of Theorem \ref{thdj3}.
We will split its proof  into three  propositions.\\

For $w \in L^2(\re)$ and $k \in \re$ set
$$
(\tau_k w)(x) : = w(x - k/b), \quad x \in \re.
$$
Evidently, $\tau_k$ is a unitary operator in $L^2(\re)$, and its restriction onto $L^2(-k/b, \infty)$ denoted by the same symbol, is a unitary operator between $L^2(-k/b, \infty)$ and $L^2(\re_+)$. Set
$$
\tilde{h}_D(k) : = \tau_k^* h_D(k) \tau_k, \quad k \in \re.
$$
Thus, $\tilde{h}_D(k)$ is the Dirichlet realization of the operator $-\frac{d^2}{dx^2}+b^2x^2$, self-adjoint in $L^2(-k/b, \infty)$. Put
$$
\tilde{\pi}_D(k) : = \tau_k^* \pi_D(k) \tau_k, \quad k \in \re,
$$
the orthogonal projection $\pi_D(k)$ being defined in \eqref{dj37}. Evidently,
$$
\tilde{\pi}_D(k) = |\tilde{\psi}_D(\cdot ; k) \rangle \langle \tilde{\psi}_D(\cdot; k)|
$$
with $\tilde{\psi}_D(\cdot; k) : = (\tau_k^* \psi_D)(\cdot;k)$.
 Further, for $k \in \re$ introduce the self-adjoint operators
$$
h_{\infty}(k) : = -\frac{d^2}{dx^2}+ (bx-k)^2, \quad \tilde{h}_{\infty} : = \tau_k^* h_\infty(k) \tau_k = -\frac{d^2}{dx^2}+ b^2 x^2,
$$
defined originally on $C_0^{\infty}(\re)$ and then closed in $L^2(\re)$. As before, put
$$
\tilde{\pi}_{D,\infty} : = \tau_k^* \pi_{D,\infty}(k) \tau_k,
$$
the orthogonal projection $\pi_{D,\infty}(k)$ being defined in \eqref{dj38}. Then,
    \bel{dj46}
\tilde{\pi}_{D,\infty} = |\tilde{\psi}_{D,\infty} \rangle \langle \tilde{\psi}_{D,\infty}|
    \ee
with $\tilde{\psi}_{D, \infty} : = (\tau_k^* \psi_{D , \infty})(\cdot;k) = \varphi$, the function $\varphi$ being defined in \eqref{61}.
 Therefore, \eqref{dj34}
is equivalent to
    \bel{bx1a}
\lim_{k\to
\infty}(E_D(k)-\mathcal{E}_D)^{-1/2}||\tilde{\pi}_{D,\infty}-(\tilde{\zero}\oplus\tilde{\pi}_D(k))|| = 0,
    \ee
where $\tilde{\zero}$ is the zero operator in $L^2(-\infty,-k/b)$.\\

Define the operator  $\tilde{h}_-(k)$ as the Dirichlet realization of the operator $-\frac{d^2}{dx^2}+b^2x^2$,  self-adjoint in $L^2(-\infty, -k/b)$. Set
     \bel{2}
\Lambda_k:=\tilde{h}_\infty^{-1}-(\tilde{h}_-(k)^{-1}\oplus\tilde{h}(k)^{-1}),
\quad k \in \R.
    \ee
    This is a rank-one operator which acts in
$L^2(\R)$. Let us give an  explicit representation of it. Let $D_\omega$ be  the standard parabolic-cylinder function \cite[Section 19]{abst} which satisfies
the equation
$$
d^2u/dx^2+(\omega+1/2-x^2/4)u=0.
$$
Then the functions
    \bel{theta}
    \Theta(x):=D_{-\frac{1}{2}}(\sqrt{2b}x), \quad x \in \re,
    \ee
    \bel{psi}\Psi(x):=\frac{1}{2\sqrt{b}}D_{-\frac{1}{2}}(-\sqrt{2b}x), \quad x \in \re,
    \ee
satisfy the equation
$$
-\frac{d^2 u}{dx^2}+ b^2 x^2 u =0,
$$
and the limiting relations
$$
\lim_{x\to\infty}\Theta(x)=0, \quad \lim_{x\to-\infty}\Psi(x)=0.
$$
Due to  the choice of the normalization constants in  \eqref{theta} -- \eqref{psi}, the Wronskian of the functions $\Theta$ and $\Psi$ is equal to $1$.
Put \bel{alphak} \alpha_k:=\frac{\Psi(-k/b)}{\Theta(-k/b)}.\ee Then
we have
    \bel{rlambda}
    \Lambda_k=\langle \cdot\,;\rho_k\rangle \rho_k,
    \ee
    with
\begin{equation} \label{sof9a}
\rho_k(x):=\left\{
\begin{array}
{ll}
    \alpha_k^{-1/2}\Psi(x)\,& {\rm if} \quad x\leq-k/b,   \\
\alpha_k^{1/2} \Theta(x)\,& {\rm if} \quad  x\geq-k/b.\\
\end{array}
\right.
\end{equation}

For further references we include here  the following asymptotic formulas
    \bel{asymp}
    \Theta(x) = e^{-bx^2/2}(\sqrt{2b}x)^{-1/2}(1+o(1)), \quad
    \Psi(x)= \frac{1}{\sqrt{2b}} e^{bx^2/2}(\sqrt{2b}x)^{-1/2}(1+o(1)),\quad x \to \infty,
    \ee
(see \cite[Subsection 19.8]{abst}).

\begin{pr}\label{le1} We have
    \bel{sof5}
||\Lambda_k||=\frac{1}{2}k^{-2}(1+o(1)), \quad k \to \infty.
    \ee
    In particular,
    \bel{bx4}
    \lim_{k \to \infty} \|\Lambda_k\| = 0.
    \ee
\end{pr}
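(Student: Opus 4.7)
The plan is to exploit the rank-one structure: since $\Lambda_k = \langle\,\cdot\,;\rho_k\rangle\,\rho_k$, its operator norm equals $\|\rho_k\|_{L^2(\re)}^2$. Thus the whole question reduces to computing the leading-order asymptotics of
\begin{equation*}
\|\rho_k\|^2 = \alpha_k^{-1}\int_{-\infty}^{-k/b}\Psi(x)^2\,dx \;+\; \alpha_k\int_{-k/b}^{\infty}\Theta(x)^2\,dx
\end{equation*}
as $k\to\infty$. The relation \eqref{bx4} is an immediate consequence of \eqref{sof5}, so I focus on \eqref{sof5}.

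First, I would observe the symmetry $\Psi(x)=\frac{1}{2\sqrt b}\,\Theta(-x)$, which follows at once from the definitions \eqref{theta}--\eqref{psi} of $\Theta$ and $\Psi$ in terms of $D_{-1/2}$. This identity lets me express everything in terms of $\Theta$: the reflection $x\mapsto -x$ gives
\begin{equation*}
\int_{-\infty}^{-k/b}\Psi(x)^2\,dx = \frac{1}{4b}\int_{k/b}^{\infty}\Theta(u)^2\,du,
\qquad \alpha_k = \frac{1}{2\sqrt{b}}\,\frac{\Theta(k/b)}{\Psi(k/b)}.
\end{equation*}
Plugging the asymptotic formulas \eqref{asymp} in at $x=k/b\to\infty$ gives $\alpha_k = \frac{1}{2\sqrt{2b}}\,e^{-k^2/b}(1+o(1))$.

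Next I would evaluate the two tail integrals by Laplace's method. For the ``easy'' tail $J_+(k):=\int_{k/b}^{\infty}\Theta(x)^2\,dx$, the asymptotic $\Theta(x)^2\sim e^{-bx^2}/(\sqrt{2b}\,x)$ and the substitution $x=k/b+t$ yield, by standard Watson/Laplace estimates,
\begin{equation*}
J_+(k) = \frac{\sqrt{b}}{2\sqrt{2}\,k^2}\,e^{-k^2/b}\,(1+o(1)), \qquad k\to\infty.
\end{equation*}
For the ``growing'' tail $J_-(k):=\int_{-k/b}^{\infty}\Theta(x)^2\,dx$, the integrand is bounded on $[0,\infty)$ (the $O(1)$ part) but blows up near the lower endpoint because $\Theta(x)\sim\sqrt{2}\,e^{bx^2/2}(\sqrt{2b}|x|)^{-1/2}$ as $x\to-\infty$ (from the same symmetry combined with \eqref{asymp}). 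The substitution $x=-k/b+t$ and the expansion $b(-k/b+t)^2 = k^2/b - 2kt + bt^2$ localize the mass to $t=O(1/k)$, and give
\begin{equation*}
J_-(k) = \frac{\sqrt{b}}{\sqrt{2}\,k^2}\,e^{k^2/b}\,(1+o(1)), \qquad k\to\infty.
\end{equation*}

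Finally I combine the three pieces: $\alpha_k^{-1}(4b)^{-1}J_+(k)$ and $\alpha_k\,J_-(k)$ each contribute $\frac{1}{4k^2}(1+o(1))$, so the two terms add up to $\frac{1}{2k^2}(1+o(1))$, which is \eqref{sof5}, and \eqref{bx4} follows. The main technical obstacle is the Laplace estimate for $J_-(k)$, because the endpoint of integration is moving and $\Theta$ is exponentially large there; care is needed to verify that the contributions away from a neighbourhood of $-k/b$ are genuinely of lower order, and that the one-sided Laplace integral $\int_0^\infty e^{-2kt}\,dt$ correctly captures the leading asymptotic despite the quadratic correction $bt^2$ in the exponent.
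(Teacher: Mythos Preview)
Your proposal is correct and follows essentially the same route as the paper: reduce $\|\Lambda_k\|$ to $\|\rho_k\|^2$ via the rank-one representation \eqref{rlambda}, use the reflection identity $\Psi(x)=\frac{1}{2\sqrt b}\Theta(-x)$ to rewrite the first integral, compute the asymptotics of $\alpha_k$ and of the two tail integrals, and combine. The only cosmetic differences are that the paper obtains the tail asymptotics by integration by parts rather than Laplace/Watson language (the same computation in disguise), and that your displayed formula $\alpha_k=\frac{1}{2\sqrt b}\frac{\Theta(k/b)}{\Psi(k/b)}$ has a slip in the prefactor (the symmetry gives $\frac{1}{4b}$, not $\frac{1}{2\sqrt b}$); your subsequent asymptotic $\alpha_k=\frac{1}{2\sqrt{2b}}e^{-k^2/b}(1+o(1))$ is nonetheless the correct one, so the argument goes through.
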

\begin{proof}
Due to \eqref{rlambda},
$||\Lambda_k||=||\rho_k||_{L^2(\R)}^2$.  Thus
    \bel{sof7}
||\Lambda_k||=\frac{1}{\alpha_k}\int_{-\infty}^{-k/b}\Psi(x)^2dx+\alpha_k\int_{-k/b}^\infty\Theta(x)^2dx.
    \ee
By \eqref{asymp},
     \bel{asalpha}
\alpha_k=\frac{1}{2\sqrt{2b}}e^{-k^2/b}(1+o(1)),\quad k\to \infty.
    \ee
Therefore we need to  estimate the integrals appearing in \eqref{sof7}.
First,
$$
\int_{-\infty}^{-k/b}\Psi(x)^2dx=\frac{1}{4b}\int_{k/b}^\infty\Theta(x)^2dx=
\frac{1}{4b}\int_{k/b}^\infty e^{-bx^2}(\sqrt{2b}x)^{-1}dx\,(1+o(1)),
\quad k\to \infty.
$$
Integrating by parts, we easily see that
$$
\int_{k/b}^\infty
e^{-bx^2}(\sqrt{2b}x)^{-1}dx=(2b)^{-3/2}e^{-k^2/b}(k/b)^{-2}(1+o(1)),
\quad k\to \infty,
$$
and then
    \bel{l1}\int_{-\infty}^{-k/b}\Psi(x)^2dx=2^{-7/2}b^{-1/2}k^{-2}e^{-k^2/b}(1+o(1)),
    \quad k\to \infty.
    \ee
    In the same way, for the second integral we find that
    \bel{l2}
    \int_{-k/b}^\infty\Theta(x)^2dx = \sqrt{\frac{b}{2}}  e^{k^2/b}k^{-2}(1+o(1)),
\quad k\to \infty.\ee Putting together \eqref{asalpha}, \eqref{l1}, and
\eqref{l2}, we obtain \eqref{sof5}.
\end{proof}

\begin{proposition}\label{pr1}
There exist $K \in \R$ and $C>0$, such that
    \bel{sof8}
||\tilde{\pi}_{D,\infty}-(\tilde{\zero} \oplus\tilde{\pi}_D(k))||\leq C
||\tilde{\pi}_{D,\infty}\Lambda_k||, \quad k > K.
    \ee
\end{proposition}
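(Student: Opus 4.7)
The plan is to treat Proposition~\ref{pr1} as a rank-one perturbation problem for the spectral projections of the bounded self-adjoint operators $A:=\tilde{h}_\infty^{-1}$ and $B:=\tilde{h}_-(k)^{-1}\oplus\tilde{h}_D(k)^{-1}$, whose difference $\Lambda_k=A-B=\langle\cdot,\rho_k\rangle\rho_k$ is non-negative and of rank one by \eqref{rlambda}. The top eigenvalue of $A$ is $1/b$ with normalized eigenvector $\varphi:=\tilde{\psi}_{D,\infty}$, so $\tilde{\pi}_{D,\infty}=|\varphi\rangle\langle\varphi|$. For $k$ large, $E_D(k)\to b$ ensures that the top eigenvalue $1/E_D(k)$ of $B$ is isolated from the remaining spectrum of $A$ (whose next point is $1/(3b)$) and from the spectrum of $\tilde{h}_-(k)^{-1}$ (whose norm tends to $0$ as $k\to\infty$). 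The associated eigenprojection is $\tilde{\zero}\oplus\tilde{\pi}_D(k)=|\chi\rangle\langle\chi|$ with $\chi:=0\oplus\tilde{\psi}_D(\cdot;k)$.

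Writing $\chi=\alpha\varphi+\eta$ with $\eta\perp\varphi$ and $|\alpha|^2+\|\eta\|^2=1$, the standard identity for two rank-one orthogonal projections yields
\begin{equation*}
\|\tilde{\pi}_{D,\infty}-(\tilde{\zero}\oplus\tilde{\pi}_D(k))\|=\sqrt{1-|\langle\varphi,\chi\rangle|^2}=\|\eta\|,
\end{equation*}
so it suffices to control $\|\eta\|$ by $\|\tilde{\pi}_{D,\infty}\Lambda_k\|$. Projecting the eigenvalue identity $(A-E_D(k)^{-1})\chi=\Lambda_k\chi$ onto the range of $I-\tilde{\pi}_{D,\infty}$ and using $[A,\tilde{\pi}_{D,\infty}]=0$ yields $(A-E_D(k)^{-1})\eta=(I-\tilde{\pi}_{D,\infty})\Lambda_k\chi$. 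The spectrum of $A$ on $(I-\tilde{\pi}_{D,\infty})L^2(\R)$ equals $\{1/((2n+1)b):n\geq 1\}$, which lies at distance at least $2/(3b)-o(1)$ from $E_D(k)^{-1}$, so the reduced resolvent is uniformly bounded and $\|\eta\|\leq C_0\|\Lambda_k\chi\|$.

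The crux is to relate $\|\Lambda_k\chi\|$ to $\|\tilde{\pi}_{D,\infty}\Lambda_k\|$ rather than to the weaker $\|\Lambda_k\|$. From $\Lambda_k\chi=\langle\rho_k,\chi\rangle\rho_k$ and $\chi=\alpha\varphi+\eta$ one gets $|\langle\rho_k,\chi\rangle|\leq|\langle\rho_k,\varphi\rangle|+\|\rho_k\|\|\eta\|$; combined with the trivial identities $\|\tilde{\pi}_{D,\infty}\Lambda_k\|=|\langle\rho_k,\varphi\rangle|\|\rho_k\|$ and $\|\Lambda_k\|=\|\rho_k\|^2$ this gives
\begin{equation*}
\|\eta\|\leq C_0\|\Lambda_k\chi\|\leq C_0\bigl(\|\tilde{\pi}_{D,\infty}\Lambda_k\|+\|\Lambda_k\|\|\eta\|\bigr).
\end{equation*}
Proposition~\ref{le1} supplies $\|\Lambda_k\|\to 0$ as $k\to\infty$, so for $k$ larger than some $K$ the factor $1-C_0\|\Lambda_k\|$ is bounded below by $1/2$, and one concludes $\|\eta\|\leq 2C_0\|\tilde{\pi}_{D,\infty}\Lambda_k\|$, which is Proposition~\ref{pr1} with $C=2C_0$.

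The key difficulty is precisely this last bootstrap: the naive rank-one perturbation argument only delivers $\|\eta\|\leq C\|\Lambda_k\|$, whereas Theorem~\ref{thdj4} requires the finer bound involving $\|\tilde{\pi}_{D,\infty}\Lambda_k\|$, which may be much smaller than $\|\Lambda_k\|$ because $\rho_k$ is concentrated near $x=-k/b$ while $\varphi=\tilde{\psi}_{D,\infty}$ is concentrated near $x=0$. The smallness of $\|\Lambda_k\|$ established in Proposition~\ref{le1} is exactly what legitimizes the self-consistent absorption and bridges this gap.
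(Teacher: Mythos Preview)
Your proof is correct. The core mechanism is the same as the paper's: a reduced-resolvent estimate combined with a self-consistent absorption that is legitimized by $\|\Lambda_k\|\to 0$ (Proposition~\ref{le1}). The presentation, however, is noticeably different. The paper derives an operator identity for $\tilde{\pi}_{D,\infty}-(\tilde{\zero}\oplus\tilde{\pi}_D(k))$ by inserting and manipulating resolvent-type expressions for both projections, arriving at a fixed-point equation of the form $D=-R\Lambda_k D+(\text{terms with }\tilde{\pi}_{D,\infty}\Lambda_k)$ which is then inverted via a Neumann series. You instead work directly at the level of eigenvectors: writing $\chi=\alpha\varphi+\eta$, invoking the norm identity $\|P-Q\|=\|\eta\|$ for rank-one projections, and projecting the single eigenvalue equation $(A-E_D(k)^{-1})\chi=\Lambda_k\chi$ onto $\operatorname{Ran}(I-\tilde{\pi}_{D,\infty})$. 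Your route is shorter and makes the role of the reduced resolvent and of the rank-one structure more transparent; the paper's route stays entirely operator-algebraic and would generalize more readily to higher-rank projections. Both rely on exactly the same two facts: uniform boundedness of $(A-E_D(k)^{-1})^{-1}(I-\tilde{\pi}_{D,\infty})$ for large $k$, and $\|\Lambda_k\|\to 0$ to close the bootstrap.
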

\begin{proof}
To begin with, let us write $\tilde{\pi}_D(k)$ in a convenient form. If $I$ is the identity operator in $L^2(\R)$, then
$$
\tilde{\pi}_{D,\infty}=\tilde{\pi}_{D,\infty}(\tilde{\zero}\oplus\tilde{\pi}_{D}(k))+\tilde{\pi}_{D,\infty}(I-(\tilde{\zero}\oplus\tilde{\pi}_D(k))) =
$$
$$
\tilde{\pi}_{D,\infty}(\tilde{\zero}\oplus\tilde{\pi}_{D}(k))+\tilde{\pi}_{D,\infty}[\tilde{h}^{-1}_\infty-{\mathcal
E}_D^{-1}-\Lambda_k] [(\tilde{h}_-(k)^{-1}
\oplus\tilde{h}(k)^{-1})-{\mathcal
E}_D^{-1}]^{-1}(I-(\tilde{\zero}\oplus\tilde{\pi}_{D}(k))) =
$$
$$
\tilde{\pi}_{D,\infty}(\tilde{\zero}\oplus\tilde{\pi}_{D}(k))-\tilde{\pi}_{D,\infty}\Lambda_k [(\tilde{h}_-(k)^{-1}-{\mathcal E}_D^{-1})^{-1}
 \oplus(\tilde{h}(k)^{-1}-{\mathcal E}_D^{-1})^{-1}](I-(\tilde{\zero}\oplus\tilde{\pi}_{D}(k))) =
 $$
$$
\tilde{\pi}_{D,\infty}(\tilde{\zero}\oplus\tilde{\pi}_D(k))-\tilde{\pi}_{D,\infty}\Lambda_k\left[
(\tilde{h}_-(k)^{-1}-{\mathcal
E}_D^{-1})^{-1}\oplus(\tilde{h}(k)^{-1}-{\mathcal
E}_D^{-1})^{-1}(I^+-\tilde{\pi}_D(k))\right],
    $$
    where  $I^+$ is the identity operator in  $L^2(-k/b,\infty)$. Similarly,
$$
\tilde{\zero}\oplus\tilde{\pi}_D(k)=\tilde{\pi}_{D,\infty}(\tilde{\zero}\oplus\tilde{\pi}_D(k))-
\left(\tilde{h}_\infty^{-1}-E_D(k)^{-1}\right)^{-1}(I-\tilde{\pi}_{D,\infty})\Lambda_k(\tilde{\zero} \oplus \tilde{\pi}_D(k)).
$$
Hence,
$$
\tilde{\pi}_{D,\infty}-(\tilde{\zero}\oplus\tilde{\pi}_D(k)) =
$$
$$
-\tilde{\pi}_{D,\infty}\Lambda_k\left[
\left(\tilde{h}_-(k)^{-1}-{\mathcal
E}_D^{-1}\right)^{-1}\oplus\left(\tilde{h}(k)^{-1}-{\mathcal
E}_D^{-1}\right)^{-1}(I^+-\tilde{\pi}_D(k))\right] +
$$
$$
\left(\tilde{h}_\infty^{-1}-E_D(k)^{-1}\right)^{-1}(I-\tilde{\pi}_{D,\infty})\Lambda_k(\tilde{\zero}\oplus\tilde{\pi}_D(k)) =
$$
$$
- \left(\tilde{h}_\infty^{-1}-E_D(k)^{-1}\right)^{-1}(I-\tilde{\pi}_{D,\infty})\Lambda_k (\tilde{\pi}_{D,\infty}-(\tilde{\zero}\oplus\tilde{\pi}_D(k)) \, -
$$
$$
\tilde{\pi}_{D,\infty}\Lambda_k\left[
\left(\tilde{h}_-(k)^{-1}-{\mathcal
E}_D^{-1}\right)^{-1}\oplus\left(\tilde{h}(k)^{-1}-{\mathcal
E}_D^{-1}\right)^{-1}(I^+-\tilde{\pi}_D(k) )\right] +
$$
$$
\left(\tilde{h}_\infty^{-1}-E_D(k)^{-1}\right)^{-1}(I-\tilde{\pi}_{D,\infty})\Lambda_k\tilde{\pi}_{D,\infty},
$$
which, combined with Proposition \ref{le1}, implies that for $k$ large enough we have
$$
\tilde{\pi}_{D,\infty}-(\tilde{\zero}\oplus\tilde{\pi}_D(k)) =
$$
$$
-\left(I+
\left(\tilde{h}_\infty^{-1}-E_D(k)^{-1}\right)^{-1}(I-\tilde{\pi}_{D,\infty})\Lambda_k\right)^{-1} \times
$$
$$
\tilde{\pi}_{D,\infty}\Lambda_k\left[
(\tilde{h}_-(k)^{-1}-{\mathcal
E}_D^{-1})^{-1}\oplus(\tilde{h}(k)^{-1}-{\mathcal
E}_D^{-1})^{-1}(I^+-\tilde{\pi}_D(k) )\right] +
$$
$$
\left(I+
\left(\tilde{h}_\infty^{-1}-E_D(k)^{-1}\right)^{-1}(I-\tilde{\pi}_{D,\infty})\Lambda_k\right)^{-1}
\left(\tilde{h}_\infty^{-1}-E_D(k)^{-1}\right)^{-1}(I-\tilde{\pi}_{D,\infty})\Lambda_k\tilde{\pi}_{D,\infty}.
$$

Finally, since the operators
$$
(\tilde{h}_-(k)^{-1}-{\mathcal
E}_D^{-1})^{-1}, \quad (\tilde{h}(k)^{-1}-{\mathcal
E}_D^{-1})^{-1}(I^+-\tilde{\pi}_D(k) ),  \quad
(\tilde{h}_\infty^{-1}-E_D(k)^{-1})^{-1}(I-\tilde{\pi}_{D,\infty}),
$$
are uniformly bounded with respect to $k$ large enough, we obtain \eqref{sof8}.
\end{proof}

\begin{proposition}\label{pr2} We have
    \bel{bx5}
E_D(k)-\mathcal{E}_D={\mathcal E}_D^2||\tilde{\pi}_{D,\infty}\Lambda_k\tilde{\pi}_{D,\infty}||\,(1+o(1)), \quad
k\to\infty.
    \ee
\end{proposition}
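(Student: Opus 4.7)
The plan is to exploit that \eqref{2} presents $B := \tilde{h}_-(k)^{-1}\oplus\tilde{h}_D(k)^{-1}$ as a rank-one perturbation of $A := \tilde{h}_\infty^{-1}$, namely $B = A - \Lambda_k$ with $\Lambda_k = |\rho_k\rangle\langle\rho_k|$. Since $\tilde\pi_{D,\infty}=|\varphi\rangle\langle\varphi|$ by \eqref{dj46}, the operator $\tilde\pi_{D,\infty}\Lambda_k\tilde\pi_{D,\infty}$ is rank one with norm $|\langle\varphi,\rho_k\rangle|^2$, so the proposition reduces to showing
$$E_D(k)-\mathcal{E}_D = \mathcal{E}_D^2|\langle\varphi,\rho_k\rangle|^2(1+o(1)),\quad k\to\infty.$$

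Next I would apply the standard rank-one eigenvalue identity. The properties of $E_D$ recalled in Subsection \ref{ss21} yield $\mathcal{E}_D = b < E_D(k) < 3b$ for $k$ large, so $E_D(k)^{-1}$ sits in the gap $((3b)^{-1}, b^{-1})$ of $\sigma(A) = \{((2n+1)b)^{-1}\}_{n\geq 0}$. Since the ground-state energy of $\tilde h_-(k)$ diverges as $k\to\infty$ (Dirichlet realization of $-d^2/dx^2+b^2x^2$ on $(-\infty,-k/b)$ forces concentration near $-k/b$ with energy of order $k^2$), the largest eigenvalue of $B$ is $E_D(k)^{-1}$, simple and isolated. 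Writing the eigenvalue equation $(A - E_D(k)^{-1})\psi = \langle\rho_k,\psi\rangle\rho_k$, one has $\langle\rho_k,\psi\rangle\neq 0$ (otherwise $E_D(k)^{-1}$ would lie in $\sigma(A)$), and pairing with $\rho_k$ gives the standard identity
$$\langle\rho_k, (A - E_D(k)^{-1})^{-1}\rho_k\rangle = 1.$$

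Expanding in the Hermite basis $\{\varphi_n\}$ of $A$, with $\varphi_0 = \varphi$ corresponding to eigenvalue $b^{-1}$, this identity becomes
$$\frac{|\langle\varphi,\rho_k\rangle|^2}{b^{-1} - E_D(k)^{-1}} + \sum_{n\geq 1}\frac{|\langle\varphi_n,\rho_k\rangle|^2}{((2n+1)b)^{-1} - E_D(k)^{-1}} = 1.$$
The tail denominators are negative and uniformly bounded away from $0$ for large $k$ (by $(6b)^{-1}$, say), so Parseval bounds the tail in absolute value by $C\|\rho_k\|^2 = C\|\Lambda_k\|$, which is $O(k^{-2}) = o(1)$ by Proposition \ref{le1}. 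This gives $|\langle\varphi,\rho_k\rangle|^2 = (b^{-1} - E_D(k)^{-1})(1+o(1))$, and since $b^{-1} - E_D(k)^{-1} = (E_D(k)-\mathcal{E}_D)/(bE_D(k))$ with $bE_D(k) \to \mathcal{E}_D^2$, the conclusion \eqref{bx5} follows.

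The only delicate step is to secure that $E_D(k)^{-1}$ is genuinely the top eigenvalue of $B$ and lies in the indicated gap of $\sigma(A)$; both points reduce to elementary monotonicity on the first Landau band together with a min--max lower bound for $\tilde h_-(k)$, so I expect no serious obstacle beyond those bookkeeping arguments.
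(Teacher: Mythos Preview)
Your argument is correct. It is, however, a genuinely different route from the paper's.

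The paper starts from the projection-sandwiched resolvent identity
\[
(\mathcal{E}_D^{-1}-E_D(k)^{-1})\,\tilde{\pi}_{D,\infty}(\tilde{\zero}\oplus\tilde{\pi}_D(k))
=\tilde{\pi}_{D,\infty}\Lambda_k(\tilde{\zero}\oplus\tilde{\pi}_D(k)),
\]
takes norms, and then uses Propositions \ref{le1} and \ref{pr1} to replace $\tilde{\zero}\oplus\tilde{\pi}_D(k)$ by $\tilde{\pi}_{D,\infty}$ at the cost of multiplicative $(1+o(1))$ factors. In particular, the paper's proof relies on Proposition \ref{pr1} (the estimate $\|\tilde{\pi}_{D,\infty}-(\tilde{\zero}\oplus\tilde{\pi}_D(k))\|\leq C\|\tilde{\pi}_{D,\infty}\Lambda_k\|$). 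Your approach instead writes the secular equation for the rank-one perturbation $B=A-\Lambda_k$ at the eigenvalue $E_D(k)^{-1}$, expands in the Hermite basis of $A$, and controls the tail only via $\|\rho_k\|^2=\|\Lambda_k\|=o(1)$ from Proposition \ref{le1}. Thus your proof of \eqref{bx5} is independent of Proposition \ref{pr1}, which is a small structural advantage; the paper's proof, in contrast, is a two-line consequence once Proposition \ref{pr1} is available, and fits the operator-theoretic narrative of the section.

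One remark: your closing ``delicate step'' is milder than you suggest. You do not need $E_D(k)^{-1}$ to be the \emph{top} eigenvalue of $B$, nor any information about $\tilde{h}_-(k)$; you only need that $E_D(k)^{-1}$ is an eigenvalue of $B$ (clear, with eigenfunction $0\oplus\tilde{\psi}_D(\cdot;k)$) and that $E_D(k)^{-1}\notin\sigma(A)$, which follows from $b<E_D(k)<3b$ for large $k$. That alone guarantees $\langle\rho_k,\psi\rangle\neq 0$ and the secular identity $\langle\rho_k,(A-E_D(k)^{-1})^{-1}\rho_k\rangle=1$.
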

\begin{proof}
By the definition of $\Lambda_k$ and the resolvent identity,
    \bel{l3}
    ({\mathcal
E}_D^{-1}-E_D(k)^{-1})\tilde{\pi}_{D,\infty}(\tilde{\zero} \oplus \tilde{\pi}_D(k))
=\tilde{\pi}_{D,\infty}\Lambda_k (\tilde{\zero} \oplus \tilde{\pi}_D(k)).
    \ee
    By Propositions \ref{le1} --  \ref{pr1},
    \bel{sof4}
||\tilde{\pi}_{D,\infty}(\tilde{\zero} \oplus \tilde{\pi}_D(k))
||=1+o(1),\;
||\tilde{\pi}_{D,\infty}\Lambda_k(\tilde{\zero} \oplus \tilde{\pi}_D(k))||=||\tilde{\pi}_{D,\infty}\Lambda_k\tilde{\pi}_{D,\infty}||(1+o(1)),\;
k\to\infty.
    \ee
    Inserting \eqref{sof4} into \eqref{l3}, we obtain \eqref{bx5}.
\end{proof}
Now we are in position to prove Theorem \ref{thdj4}.
By Propositions \ref{pr1} and
\ref{pr2} we have
   $$
(E_D(k)-\mathcal{E}_D)^{-1/2}||\tilde{\pi}_{D,\infty}-(\tilde{\zero}\oplus\tilde{\pi}_D(k))|| \leq
    $$
     \bel{bx3}
 C \frac{\|\tilde{\pi}_{D,\infty} \Lambda_k\|}{\|\tilde{\pi}_{D,\infty} \Lambda_k^{1/2}\|} (1 + o(1)) \leq C \|\Lambda_k\|^{1/2} (1 + o(1)), \quad k \to \infty.
    \ee
Now, \eqref{bx3} and \eqref{bx4} imply \eqref{bx1a} and, hence, \eqref{dj34}.\\

(v) {\em End of the proof of Theorem \ref{thdj3}}. By analogy with \eqref{sof1},  define the operator
   $$
R_{D, \infty}(\lambda, A):= {\mathcal F}^* \int^\oplus_{(A,\infty)} (E_D(k)-{\mathcal E}_D + \lambda)^{-1/2}\pi_{D, \infty}(k)\,dk\,{\mathcal F}, \quad \lambda > 0, \quad A \in [-\infty, \infty),
$$
  self-adjoint in $L^2(\rd)$.
The remaining part of the proof of Theorem \ref{thdj3} is based on the following two facts.
First, for any $A \in [-\infty, \infty)$, $\tilde{A} \in (A, \infty)$, and $r \in (0, \infty)$ we have
    \bel{dj40}
    n_*(r; V_*^{1/2} (R_D(\lambda, A) - R_D(\lambda, \tilde{A})) = O_{r, A}(1),
    \ee
    \bel{dj41}
    n_*(r; V_*^{1/2} (R_{D, \infty}(\lambda, A) - R_{D,\infty}(\lambda, \tilde{A})) = O_{r, A}(1)
    \ee
    as $\lambda \downarrow 0$, since the compact operators
    $$
    V_*^{1/2} (R_D(\lambda, A) - R_D(\lambda, \tilde{A})) \quad \mbox{and} \quad V_*^{1/2} (R_{D, \infty}(\lambda, A) - R_{D,\infty}(\lambda, \tilde{A}))
    $$
    admit norm limits as  $\lambda \downarrow 0$.
    Secondly, we have
    $$
\| V_*^{1/2} ( R_{D,\infty}(\lambda, \tilde{A}) -R_D(\lambda;
\tilde{A}))\| \leq
$$
    \bel{lau34}
\|V_*\|_{L^{\infty}(\rd)}^{1/2} \sup_{k > \tilde{A}} (
E_D(k)-{\mathcal E}_D)^{-1/2} \|(\zero \oplus{\pi}_D(k)) -
{\pi}_{D,\infty}\|,
    \ee
for any $\lambda > 0$. By Theorem \ref{thdj4} and \eqref{lau34}, we find that for each $r>0$ there exists $A_0 = A_0(r)$ such that $\tilde{A} > A_0(r)$ implies
$$
\| V_*^{1/2} ( R_{D,\infty}(\lambda, \tilde{A}) - R_D(\lambda ;
\tilde{A}))\| < r,
$$
for any $\lambda > 0$, and, hence,
    \bel{dj42}
    n_*(r; V_*^{1/2} ( R_{D,\infty}(\lambda, \tilde{A}) - R_D(\lambda ;
\tilde{A})) = 0, \quad  \lambda > 0.
    \ee
 Combining \eqref{dj29} with \eqref{dj40}, \eqref{dj41}, and \eqref{dj42}, and applying the Ky Fan inequalities, we get
$$
n_*(1+ \varepsilon; V_*^{1/2}  R_{D,\infty}(\lambda; A)) + O_{\varepsilon, A}(1) \leq
$$
$$
{\mathcal N}_D(\lambda) \leq
$$
    \bel{dj43}
n_*(1 - \varepsilon; V_*^{1/2}  R_{D,\infty}(\lambda; A)) + O_{\varepsilon, A}(1), \quad \lambda \downarrow 0.
    \ee
In order to see that \eqref{dj43} is equivalent to \eqref{dj28}, it suffices to note that the operators $V_*^{1/2} R_{D,\infty}(\lambda; A)$ and $S_{D}(\lambda; A)$
have the same non zero singular values.

\subsection{Proof of Corollary 2.2}
Throughout the subsection we assume that $V$ satisfies \eqref{dj35}.
For $\delta \in (0,1/2)$ introduce the intervals
    $$
    I_- = I_-(\delta) : = (\delta, 1-\delta), \quad I_+ = I_+(\delta) : = (0, 1+\delta).
    $$
For  $m>0$ define  $\Gamma^\pm_\delta(m): L^2(I_\pm) \to L^2(\Omega_\pm)$
    as the operator with integral kernel
    $$
    \pi^{-1/2} m  e^{-bx^2/2} e^{m(x+iy)k} k^{1/2}, \quad k \in I_\pm, \quad (x,y) \in
    \Omega_\pm.
    $$
       {\em Remark}: Introduce the set
          \bel{jul20}
          {\mathcal B}(\Omega_\pm) : = \left\{u \in L^2(\Omega_\pm) \, | \, u \, \mbox{is analytic in} \, \Omega_\pm\right\}
          \ee
          considered as a subspace of the Hilbert space $L^2(\Omega_\pm; e^{-bx^2} dxdy)$. Note that as a functional set ${\mathcal B}(\Omega_\pm)$ coincides with the Bergman space over
          $\Omega_\pm$ (see e.g. \cite[Subsection 3.1]{ha}). Then,
    up to unitary equivalence, the operators $\Gamma_\delta^{\pm}(m)$ map $L^2(I_\pm)$ into ${\mathcal B}(\Omega_\pm)$.\\

    In the following proposition we reduce  the analysis of the asymptotic behaviour of $n_*(r; S_D(\lambda; A))$ to the study of the spectral asymptotics of the operators $\Gamma_\delta^{\pm}(m)$ as $m \to \infty$.

\begin{pr} \label{lauth2}
 Assume that  $V$  satisfies
\eqref{dj35}. Then we have
    \bel{jun5}
    n_*(r; S_D(\lambda; A)) \geq n_*(r (1 + \varepsilon) 2 c_-^{-1/2} \sqrt{b |{\ln{\lambda}}|}; \Gamma^-_\delta (\sqrt{b |{\ln{\lambda}}|})) + O(1),
    \ee
    \bel{jun6}
 n_*(r; S_D(\lambda; A))  \leq n_*(r (1 - \varepsilon) 2 c_+^{-1/2}  ; \Gamma^+_\delta (\sqrt{b |{\ln{\lambda}}|})) + O(1),
    \ee
as $\lambda \downarrow 0$, for all  $A>0$, $\varepsilon
\in (0,1)$, $\delta \in (0,1/2)$ and $r>0$, the constants $c_\pm$ being introduced in \eqref{dj35}.
\end{pr}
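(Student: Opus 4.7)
The plan is to put the integral kernel of $S_D(\lambda;A)$ into a form matching that of $\Gamma^\pm_\delta(m)$, $m := \sqrt{b|\ln\lambda|}$, modulo multiplication operators and a controllable remainder. Substituting $\psi_{D,\infty}(x;k) = \pi^{-1/4}b^{1/4} e^{-bx^2/2} e^{kx} e^{-k^2/(2b)}$ yields
\[
T(x,y,k) = (2\pi)^{-1/2} \pi^{-1/4} b^{1/4} V_*(x,y)^{1/2} e^{-bx^2/2} e^{k(x+iy)} \cdot \Big[e^{-k^2/(2b)}(E_D(k) - \mathcal{E}_D + \lambda)^{-1/2}\Big],
\]
in which the bracket carries the whole non-trivial interplay between $k$ and $\lambda$, whereas the remaining factors already have the functional form of the kernel of $\Gamma^\pm_\delta$.

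The key input is the asymptotic $E_D(k) - \mathcal{E}_D = C_* k e^{-k^2/b}(1+o(1))$ as $k \to \infty$, for an explicit $C_* > 0$ extracted from Propositions \ref{le1} and \ref{pr2} via $E_D - \mathcal{E}_D \sim \mathcal{E}_D^2 |\langle\varphi,\rho_k\rangle|^2$ together with the leading-order computation of $\langle\varphi,\rho_k\rangle$ using \eqref{asymp}. Under the unitary rescaling $(Uf)(\kappa) = m^{1/2}f(m\kappa)$ (so $k = m\kappa$) and an additional factor $m^{1/2}$ absorbed into the kernel, the two exponentials $e^{-k^2/(2b)}$ and $e^{k^2/(2b)}$ (the latter hidden in $(E_D - \mathcal{E}_D)^{-1/2}$) cancel exactly on any compact $\kappa$-subset of $(0,1)$; the bracket then becomes $(1+o_\varepsilon(1))(C_* m\kappa)^{-1/2}$ uniformly in $\kappa$. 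The transformed kernel is, up to $(1+o(1))$, a fixed numerical multiple of
\[
V_*(x,y)^{1/2}\, e^{-bx^2/2}\, e^{m\kappa(x+iy)}\, \kappa^{-1/2},
\]
which equals the kernel of $\Gamma^\pm_\delta(m)$ multiplied on the $(x,y)$-side by $V_*(x,y)^{1/2}$ and on the $\kappa$-side by a constant times $(m\kappa)^{-1}$. Combining the pointwise sandwich $c_-\one_{\Omega_-} \leq V \leq c_+\one_{\Omega_+}$ with the singular-value monotonicity $s_n(AT) \geq s_n(BT)$ for $A^2 \geq B^2 \geq 0$ (which follows from $s_n(AT)^2 = s_n(T^*A^2 T)$ and the Weyl monotonicity of eigenvalues of positive operators) converts the pointwise kernel estimates into the desired singular-value inequalities; inverting them produces \eqref{jun5}--\eqref{jun6} with the stated constants. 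The extra factor $m$ in \eqref{jun5} reflects that the infimum of $(m\kappa)^{-1}$ over $I_-$ is $(m(1-\delta))^{-1}$, of order $m^{-1}$.

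Two remainder contributions require control: the tail $\kappa > 1+\delta$ (for the upper bound) and the narrow boundary pieces outside $I_\pm$. On the tail, the bracket is dominated by $e^{-k^2/(2b)}\lambda^{-1/2} = \lambda^{(\kappa^2-1)/2}$, so the Hilbert--Schmidt norm of the tail operator tends to $0$ as $\lambda \downarrow 0$ and its count $n_*(r;\cdot)$ above any fixed $r$ vanishes for $\lambda$ small enough. The boundary pieces have kernels whose Hilbert--Schmidt norms remain uniformly bounded, since the $\kappa$-integration range is of bounded measure independent of $\lambda$, and the Ky Fan inequalities \eqref{lau13} absorb them into the $O(1)$ remainder. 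The principal obstacle is establishing the $(1+o(1))$ in the asymptotic of the bracket \emph{uniformly} in $\kappa \in I_\pm$, which is exactly what allows the errors to collapse into the multiplicative factors $(1\pm\varepsilon)$; this uniformity requires the parabolic-cylinder-function computations underlying Propositions \ref{le1}--\ref{pr2} to hold uniformly for $\kappa$ bounded away from the transition points $0$ and $1$, which is precisely the role of the intervals $I_\pm$.
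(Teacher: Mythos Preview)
Your approach is essentially the paper's own (which defers the details to \cite[Proposition~5.4]{bmr}): expand the kernel of $S_D$, insert the asymptotic $E_D(k)-\mathcal{E}_D \sim C_* k\,e^{-k^2/b}$ (recorded here as Lemma~\ref{pr3} with $C_*=2b^{1/2}\pi^{-1/2}$), rescale $k=m\kappa$ with $m=\sqrt{b|\ln\lambda|}$, and compare with $\Gamma^\pm_\delta$ via the sandwich on $V$ together with Ky Fan and Hilbert--Schmidt tail estimates. Your constant tracking is correct and recovers the factor~$2$.

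There is, however, one point you leave implicit that the paper addresses explicitly. After rescaling, the discrepancy between your transformed kernel and that of $\Gamma^\pm_\delta$ is, as you say, the multiplier $(m\kappa)^{-1}$ on the $\kappa$-side. For the lower bound \eqref{jun5} on $I_-=(\delta,1-\delta)$ you correctly take the infimum, $\sim m^{-1}$, which yields the extra factor $\sqrt{b|\ln\lambda|}$. But for the upper bound \eqref{jun6} you need the \emph{supremum} of $(m\kappa)^{-1}$ to be bounded independently of~$m$, and on $I_+=(0,1+\delta)$ this supremum is infinite. The rescaled domain is actually $(A/m,1+\delta)$, on which $(m\kappa)^{-1}\le A^{-1}$, and it is this observation (with $A>1$, say) that closes the argument; your discussion of ``boundary pieces'' and of the uniform asymptotic ``for $\kappa$ bounded away from $0$ and $1$'' does not cover this. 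The paper makes the same point in a slightly different guise: it applies the elementary inequality $k^{-1}<k$ for $k\in(A,\infty)$, $A>1$, \emph{before} rescaling, thereby converting the bracket from $\sim k^{-1/2}$ to an upper bound $\sim k^{1/2}$ and reducing \eqref{jun6} directly to the situation of \cite{bmr}. Either device works, but the step must be made explicit for \eqref{jun6} to follow.
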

In the proof of Proposition \ref{lauth2} we systematically use the following lemma which provides explicitly the first asymptotic term of $E_D(k) - {\mathcal E}_D$ as $k \to \infty$.

\begin{lemma}\label{pr3}
We have
    \bel{asymband}
    E_D(k)-\mathcal{E}_D = \frac{2 b^{1/2}}{\sqrt{\pi}} k e^{-b^{-1}k^2}(1+o(1)),
    \quad k\to \infty.
    \ee
    \end{lemma}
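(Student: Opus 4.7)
The plan is to combine Proposition \ref{pr2} with the explicit rank-one representation of $\Lambda_k$ from \eqref{rlambda}--\eqref{sof9a}. Since both $\tilde{\pi}_{D,\infty}=|\varphi\rangle\langle\varphi|$ and $\Lambda_k=|\rho_k\rangle\langle\rho_k|$ are rank one, we have $\tilde{\pi}_{D,\infty}\Lambda_k\tilde{\pi}_{D,\infty}=|\langle\varphi,\rho_k\rangle|^2\tilde{\pi}_{D,\infty}$, so using ${\mathcal E}_D=b$ Proposition \ref{pr2} gives
$$
E_D(k)-{\mathcal E}_D=b^2|\langle\varphi,\rho_k\rangle|^2(1+o(1)),\quad k\to\infty,
$$
and the entire task reduces to computing $\langle\varphi,\rho_k\rangle$ asymptotically.

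Using the explicit formula \eqref{sof9a},
$$
\langle\varphi,\rho_k\rangle=\alpha_k^{-1/2}\int_{-\infty}^{-k/b}\varphi(x)\Psi(x)\,dx+\alpha_k^{1/2}\int_{-k/b}^{\infty}\varphi(x)\Theta(x)\,dx.
$$
The asymptotics of $\alpha_k$ are supplied by \eqref{asalpha}; those of $\Theta$ and $\Psi$ at $+\infty$ are given in \eqref{asymp}. From the definitions \eqref{theta}--\eqref{psi} I extract the reflection identity $\Psi(x)=(2\sqrt{b})^{-1}\Theta(-x)$, which yields the corresponding behaviors at $-\infty$. Multiplying by $\varphi(x)=\pi^{-1/4}b^{1/4}e^{-bx^2/2}$, the Gaussians in $\varphi(x)\Theta(x)$ cancel as $x\to-\infty$, leaving the algebraic density $\varphi(x)\Theta(x)\sim 2^{1/4}\pi^{-1/4}|x|^{-1/2}$, while $\varphi(x)\Psi(x)\sim C\,|x|^{-1/2}e^{-bx^2}$ still carries Gaussian decay.

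A routine Laplace-type estimate yields $\int_{-\infty}^{-k/b}\varphi\Psi\,dx=O(k^{-3/2}e^{-k^2/b})$, so the first summand is of order $k^{-3/2}e^{-k^2/(2b)}$. In the second integral the dominant contribution comes not from any exponential concentration but from the algebraic tail on $(-k/b,0)$: since $\int_{-k/b}^{0}|x|^{-1/2}\,dx=2(k/b)^{1/2}$, one obtains $\int_{-k/b}^{\infty}\varphi\Theta\,dx\sim 2^{5/4}\pi^{-1/4}b^{-1/2}k^{1/2}$. Combined with $\alpha_k^{1/2}\sim 2^{-3/4}b^{-1/4}e^{-k^2/(2b)}$, the second summand equals $\sqrt{2}\,\pi^{-1/4}b^{-3/4}k^{1/2}e^{-k^2/(2b)}(1+o(1))$. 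Squaring and multiplying by $b^2$ produces the constant $2b^{1/2}/\sqrt{\pi}$ in \eqref{asymband}.

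The main obstacle is conceptual: one must recognize that, despite the prefactor $\alpha_k^{-1/2}\sim e^{k^2/(2b)}$ being exponentially large, the first summand is genuinely negligible. The mechanism is that on $(-k/b,0)$ the solution $\Theta$ grows exponentially while $\varphi$ decays exponentially, producing cancellation that leaves only an algebraically small integrand; when integrated over an interval of length $k/b$, this produces an additional factor $k^{1/2}$ that, after multiplication by $\alpha_k^{1/2}$, outweighs the Laplace contribution of the $\Psi$-integral by a factor of order $k^2$. Once this competition between the two summands is correctly diagnosed, the rest of the argument is routine bookkeeping of Gaussian integrals.
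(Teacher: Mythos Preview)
Your proof is correct and follows essentially the same approach as the paper's: both use Proposition~\ref{pr2} to reduce \eqref{asymband} to computing $|\langle\varphi,\rho_k\rangle|^2$ via the decomposition \eqref{dj100}, and then extract the asymptotics of the two integrals from \eqref{asymp} and \eqref{asalpha}. The only cosmetic difference is that where the paper invokes ``integrating by parts'' (presumably the Wronskian identity $(\varphi\Theta'-\varphi'\Theta)'=-b\,\varphi\Theta$ to evaluate $\int_{-k/b}^{\infty}\varphi\Theta\,dx$ via boundary terms), you obtain the same leading term by direct asymptotic substitution on $(-k/b,-A)$ and integration of $|x|^{-1/2}$; both routes yield the constant $2b^{1/2}/\sqrt{\pi}$.
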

\begin{proof}
From   \eqref{rlambda},  \eqref{sof9a}, and \eqref{dj46}, we have
$$
||\tilde{\pi}_{D,\infty}\Lambda_k\tilde{\pi}_{D,\infty}||=|\langle\rho_k,\tilde{\psi}_{D,\infty}\rangle|^2 = |\langle\rho_k,\varphi\rangle|^2 =
$$
    \bel{dj100}
\left|\alpha_k^{-1/2} \int_{-\infty}^{-k/b}\Psi(x)\varphi(x)dx+
\alpha_k^{1/2} \int_{-k/b}^{\infty}\,\Theta(x)\varphi(x)dx\right|^2.
    \ee
Using \eqref{bx5}, and integrating by parts in \eqref{dj100}, bearing in mind \eqref{asymp}, we obtain \eqref{asymband}.
\end{proof}

Otherwise, the proof of Proposition \ref{lauth2} repeats almost word by word the proof of \cite[Proposition 5.4]{bmr} with $j=1$ so that we omit the details. Yet, we would like to note here that the slight differences in the two proofs are due to the fact that the asymptotics in \eqref{asymband} is of order $k e^{-b^{-1}k^2}$ while the corresponding asymptotics which is used in \cite[Proposition 5.4]{bmr} with $j=1$ is of order $k^{-1} e^{-b^{-1}k^2}$ (see \cite[Proposition 4.2]{bmr} with $j=1$ and $x_0 = 0$). This difference in the asymptotics explains in particular the extra factor $\sqrt{b |\ln{\lambda}|}$ in the argument of the counting function $n_*$ at the r.h.s. of \eqref{jun5}. In order to reduce the analysis in the present article to the analysis in \cite{bmr}, we use at the first stage of the proof of the upper bound \eqref{jun6} the estimate $k^{-1} < k$ for $k \in (A,\infty)$ with $A > 1$, while at the last stage of the proof of the lower bound \eqref{jun5} we use the estimate $k^{-1} > k$ for $k \in (\delta, 1-\delta)$ with $\delta \in (0,1/2)$. \\

In order to complete the proof of Corollary \ref{fdj1}, it suffices to prove the following

\begin{follow} \label{fdj3}
Under the assumptions of Corollary \ref{fdj1} we have
\bel{dj50}
    \lim_{\delta \downarrow 0} \limsup_{\lambda \downarrow 0}
    |\ln{\lambda}|^{-1/2} n_+(r;
    \Gamma_{\delta}^+(\sqrt{b|\ln{\lambda}|})^* \Gamma_{\delta}^+(\sqrt{b|\ln{\lambda}|})) \leq {\mathcal C}_+.
    \ee
\bel{dj51}
    \lim_{\delta \downarrow 0} \liminf_{\lambda \downarrow 0}
    |\ln{\lambda}|^{-1/2} n_+(r \sqrt{b|\ln{\lambda}|};
    \Gamma_{\delta}^-(\sqrt{b|\ln{\lambda}|})^* \Gamma_{\delta}^-(\sqrt{b|\ln{\lambda}|})) \geq {\mathcal C}_-.
    \ee
\end{follow}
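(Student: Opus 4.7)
The two bounds \eqref{dj50}--\eqref{dj51} concern the spectral counting function of the positive compact operator $\Gamma_\delta^\pm(m)^* \Gamma_\delta^\pm(m)$ in the limit $m := \sqrt{b|\ln\lambda|} \to \infty$. The structural observation already made in the remark before Proposition \ref{lauth2} is the starting point: after stripping off the unitary factor $e^{bx^2/2}$, the range of $\Gamma_\delta^\pm(m)$ sits inside the Bergman space $\mathcal{B}(\Omega_\pm)$, so the analysis falls squarely within the geometric Bergman-space framework developed in \cite[Theorem 6.1]{bmr}, from which the constants $\mathcal{C}_\pm$ themselves were extracted. My plan is to reproduce that argument for the present kernel; the one genuine discrepancy is the weight $k^{1/2}$ appearing here in place of $k^{-1/2}$ there, and this difference, combined with the ``$k^{\pm 1}$ trick'' explained right after Proposition \ref{lauth2}, is precisely what accounts for the extra factor $\sqrt{b|\ln\lambda|}$ in the threshold of \eqref{dj51}.

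\emph{Upper bound \eqref{dj50}.} I would fix $(\xi, R) \in K(\Omega_+)$ so that $\Omega_+ \subset B_R(\xi + i\eta)$ for some $\eta \in \re$, and compare $\Gamma_\delta^+(m)^* \Gamma_\delta^+(m)$ with the analogous operator whose spatial integration is over the enclosing disk rather than over $\Omega_+$. Since $\Omega_+ \subset B_R(\xi+i\eta)$, the latter dominates the former in the form sense, and the Weyl inequality \eqref{lau11} transfers this to the corresponding eigenvalue counting functions. The disk operator enjoys full rotational symmetry around $\xi+i\eta$: the monomial system $\{(z - \xi - i\eta)^n\}_{n\ge 0}$ diagonalizes the conjugate $\Gamma^+ \Gamma^{+*}$ restricted to the disk Bergman space, and its eigenvalues can be written in closed form in terms of the Taylor coefficients of $e^{mzk}$ integrated against $\one_{I_+}(k)\,k\,dk$. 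A Stirling-type estimate converts the condition that the $n$-th eigenvalue exceed $r$ into an inequality of the form $n \ln n \lesssim m\,\xi$, whose number of solutions equals $R\,\varkappa(\xi_+/(eR))\,m\,(1+o(1))$. Taking the infimum over $(\xi, R) \in K(\Omega_+)$ and then letting $\delta \downarrow 0$ produces $\mathcal{C}_+$.

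\emph{Lower bound \eqref{dj51}.} I would select a vertical segment $\{x_0\} \times (y_1, y_2) \subset \overline{\Omega_-}$ of length arbitrarily close to $\mathbf{c}_-(\Omega_-)$, and introduce the orthonormal family $\{f_n\}_{n=1}^{N(m)} \subset L^2(I_-)$ of Fourier modes $f_n(k) = c_n\,\one_{I_-}(k)\,e^{-imk\eta_n}$, where $\{\eta_n\}$ is the uniform net of spacing $2\pi/((1-2\delta)m)$ inside $(y_1, y_2)$, so that $N(m) \sim (1-2\delta)(y_2 - y_1)\,m/(2\pi)$. A direct computation---stationary-phase on the $k$-integral together with Fourier orthogonality on the $y$-integral---shows that the images $\Gamma_\delta^-(m) f_n$ localize near $(x_0, \eta_n)$ and are approximately mutually orthogonal, with each $\|\Gamma_\delta^-(m) f_n\|^2$ exceeding the polynomial threshold $r\sqrt{b|\ln\lambda|}$ by an exponential margin. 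A variational lower bound then yields $N(m)(1+o(1))$ eigenvalues of $\Gamma_\delta^-(m)^* \Gamma_\delta^-(m)$ above this threshold, and letting $\delta \downarrow 0$ produces $\mathcal{C}_- = (2\pi)^{-1}\sqrt{b}\,\mathbf{c}_-(\Omega_-)$.

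The principal technical hurdle is controlling the off-diagonal remainders of the matrices of $\Gamma_\delta^{\pm *}(m)\Gamma_\delta^\pm(m)$ in each model basis---both for the enclosing-disk reduction of the upper bound and for the truncated Fourier modes of the lower bound. These remainders are exponentially small in $m$ by standard Gaussian/stationary-phase estimates, but the verification requires careful separation of the leading terms from the decaying ones, essentially identical to the analysis in \cite[Section 6]{bmr}. The only genuinely new ingredient here is the handling of the $k^{1/2}$ weight (replacing the $k^{-1/2}$ of \cite{bmr}), which is absorbed precisely into the additional factor $\sqrt{b|\ln\lambda|}$ in the threshold of \eqref{dj51} via the $k^{\pm 1}$ estimates appearing at the end of Proposition \ref{lauth2}.
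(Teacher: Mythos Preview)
Your upper-bound sketch matches the paper's: both defer to \cite[Subsection 6.2]{bmr}, where the enclosing-disk reduction and monomial diagonalization you describe are carried out.

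For the lower bound \eqref{dj51} you take a different route from the paper. The paper first replaces $\Omega_-$ by a thin open rectangle $\Pi=(\alpha,\beta)\times(-L,L)\subset\Omega_-$ (not a segment of measure zero), so that the $y$-integration over $\Pi$ produces a clean sinc kernel; the resulting operator on $L^2(I_-(\delta))$ is written as a difference $G_{\beta,\delta}(m)-G_{\alpha,\delta}(m)$ of operators with kernel $e^{\eta m(k+k')}\frac{\sin(mL(k-k'))}{\pi(k-k')}\frac{2\sqrt{kk'}}{k+k'}$, the exponential prefactor $e^{2m\beta\delta}$ is used via the mini-max principle to absorb the polynomial threshold $rm$ into a fixed threshold $s$, the term $G_{\alpha-\beta,\delta}(m)$ is shown to vanish in norm, and finally \cite[Corollary 6.3]{bmr} on the Landau--Pollak--type operator $g_{\mathcal I}(m)$ supplies the count $mL|I_-|/\pi$. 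Your approach---explicit Fourier trial functions $f_n(k)=c_n\one_{I_-}(k)e^{-imk\eta_n}$ spaced so as to be orthonormal on $I_-$---is essentially a direct variational re-proof of that last corollary, and should also succeed, but two points in your sketch are imprecise. First, there is no stationary phase in the $k$-integral: the exponent $mk(x+i(y-\eta_n))$ is linear in $k$, so endpoint (Laplace-type) analysis is what applies. Second, the images $\Gamma_\delta^-(m)f_n$ do not localize in $x$ near any $x_0$; the real exponential $e^{mxk}$ grows monotonically in $x$ over $\Omega_-$, and it is exactly this growth (after the $L^2(\Omega_-)$ integration) that furnishes the ``exponential margin'' over the threshold $rm$ you invoke. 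You would also need to pass to a rectangle rather than a one-dimensional segment to make the $y$-orthogonality of the images clean. The paper's modular route is shorter because it quotes a ready-made spectral asymptotic (and, incidentally, the paper notes that its argument also repairs a gap in the original lower-bound proof of \cite[Eq.~(6.10)]{bmr}); your variational route is more self-contained but requires redoing that work by hand.
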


\begin{proof}
The proof of \eqref{dj50} can be found in \cite[Subsection 6.2]{bmr}.\\
Let us prove \eqref{dj51} modifying slightly the argument in \cite[Subsection 6.1]{bmr}.
Let $\Omega \subset \rd$ be a bounded domain, and ${\mathcal
    I} \subset (0,\infty)$ be a bounded open non-empty interval.
    For $m>0$  define the operator ${\mathcal G}_{m}(\Omega,
    {\mathcal I}) : L^2({\mathcal I}) \to L^2({\mathcal I})$ as
    the operator with integral kernel
   $$
    \pi^{-1} m^2 \sqrt{k k'} \int_\Omega e^{m(z k + \bar{z} k')}
    d\mu(z), \quad k, k' \in {\mathcal I}.
    $$
    Set
   $\epsilon_- : = \inf_{(x,y) \in \Omega_-} e^{-bx^2}$.
     Then we have
     \bel{jul53}
     \Gamma_{\delta}^-(m)^*
     \Gamma_{\delta}^-(m) \geq \epsilon_- {\mathcal G}_{m}(\Omega_-,
    I_-(\delta)), \quad m>0.
    \ee
    Further, let $\Pi  \subset
    \Omega_-$ be an open non-empty rectangle whose sides are
    parallel to the coordinate axes.
    Assume, without any loss of generality, that $\Pi =
    (\alpha, \beta) \times (-L,L)$ with $0<\alpha<\beta<\infty$
    and $L \in (0,\infty)$. Evidently,
    \bel{jul54}
    {\mathcal G}_{m}(\Omega_-,
    I_-(\delta)) \geq {\mathcal G}_{m}(\Pi,
    I_-(\delta)), \quad m>0.
    \ee
    For $\eta \in \re$ and $\delta \in (0,1/2)$ define the operator
    $G_{\eta, \delta}(m) : L^2(I_-(\delta)) \to L^2(I_-(\delta))$ as the integral operator
    with kernel
    $$
    e^{\eta m(k + k')} \frac{\sin{(mL(k-k'))}}{\pi (k-k')} \frac{2\sqrt{k k'}}{k+k'}, \quad k, k' \in I_-(\delta).
    $$
  Then
    \bel{jul55}
    {\mathcal G}_{m}(\Pi,
    I_-(\delta)) =
     G_{{\beta}, \delta}(m) - G_{{\alpha},
    \delta}(m).
    \ee
 By the mini-max principle, we have
    \bel{cdj2}
    n_+(rm; {G_{{\beta}, \delta}}({ m}) - {G_{{\alpha}, \delta}}(m)) \geq
    n_+(rm e^{-2m {\beta}\delta}; {G_{0, \delta}}({ m}) - {G_{{\alpha} - {\beta}, \delta}}(m))
    \ee
    with $0< \alpha < \beta <\infty$, $m \in (0,\infty)$, $r \in (0,\infty)$. Fix $s \in (0,\infty)$ and find $m_0$ such that $r me^{-2m \beta \delta} < s/2$ for $m > m_0$. Using the fact that the counting function $n_+(r; \cdot)$, $r\in (0,\infty)$, is decreasing, as well as the Weyl inequalities \eqref{lau11}, we get
  $$
   n_+(rm e^{-2m \beta \delta}; {G_{0, \delta}}({ m}) - {G_{{\alpha}-{\beta}, \delta}}(m)) \geq n_+(s/2; {G_{0, \delta}}({ m}) - {G_{{\alpha}-{\beta}, \delta}}(m)) \geq
   $$
    \bel{cdj3}
   n_+(s; G_{0, \delta}({ m})) - n_+(s/2;{G_{{\alpha}-{\beta}, \delta}}(m)).
   \ee
   Evidently, $\|G_{\alpha-\beta, \delta}(m)\| \leq \|G_{{\alpha}-{\beta}, \delta}(m)\|_2$,  and
   $$
   \|G_{{\alpha}-{\beta}, \delta}(m)\|_2^2 = \frac{4}{\pi^2} \int_{I_-(\delta)} \int_{I_-(\delta)} e^{2({\alpha}-{\beta})m(k+k')} \left(\frac{\sin{(mL(k-k'))}}{k-k'}\right)^2 \frac{k k'}{(k+k')^2} dk dk' \leq
   $$
   $$
   \frac{L^2m^2}{\pi^2} e^{4({\alpha}-{\beta})m\delta} (1-2\delta)^2,
   $$
   so that $\lim_{m \to \infty} \|G_{{\alpha}-{\beta}, \delta}(m)\| = 0$. Fix $s \in (0,\infty)$ and find $m_1$ such that $\|G_{{\alpha}-{\beta}, \delta}(m)\| < s/2$ for $m > m_1$. Then
   \bel{cdj5}
   n_+(s/2;{G_{{\alpha}-{\beta}, \delta}}(m)) = 0, \quad m > m_1.
   \ee

Putting together \eqref{jul53} --  \eqref{cdj5},
    we find that  for each $\delta \in (0,1/2)$, $r \in (0,\infty)$, and $s \in (0,\infty)$, we have
    \bel{san7}
    n_+(rm; \Gamma_{\delta}^-(m)^* \Gamma_{\delta}^-(m)) \geq
    n_+(s\epsilon_-^{-1}; G_{0, \delta}(m)),
    \ee
provided that $m > \max{\{m_0,m_1\}}$. Further, let, as above, ${\mathcal I} \subset \re_+$ be an open bounded interval.
Define the
    operator $g_{\mathcal I}(m): L^2(\mathcal I) \to L^2(\mathcal
    I)$, $m>0$,
    as the operator with integral kernel
    $$
\frac{\sin{(mL(k-k'))}}{\pi(k-k')} \frac{2\sqrt{k k'}}{k+k'}, \quad
k,k' \in {\mathcal I}.
    $$

   Note that
   \bel{cdj7}
   G_{0,\delta}(m) = g_{I_-(\delta)}(m).
   \ee
By \cite[Corollary 6.3]{bmr} we have
    \bel{renov5}
    \lim_{m \to \infty} m^{-1} n_+(s; g_{\mathcal I}(m))  =
    \left\{
    \begin{array} {l}
    \frac{L|{\mathcal I}|}{\pi} \quad {\rm if} \quad s \in (0,1), \\
    0 \quad {\rm if} \quad s > 1.
    \end{array}
    \right.
    \ee

    Now fix $s < \epsilon_-$ in \eqref{san7}, take into account \eqref{cdj7}, and put together \eqref{san7}
    and \eqref{renov5}; thus we find that
the
    asymptotic estimate
    $$
\liminf_{\lambda \downarrow 0} |\ln{\lambda}|^{-1/2} n_+(r \sqrt{b |\ln{\lambda}|} ;
    \Gamma_{\delta}^-(\sqrt{b |\ln{\lambda}|})^* \Gamma_{\delta}^-(\sqrt{b |\ln{\lambda}|})) \geq
    \frac{\sqrt{b}L}{\pi}(1-2\delta)
    $$
    holds for every $\delta \in (0,1/2)$. Letting $\delta \downarrow 0$, and optimizing with respect to $L$, we
    obtain \eqref{dj51}.
 \end{proof}
 {\em Remark.} Since
 $$
 n_+(rm ; \Gamma_\delta^-(m)^* \Gamma_\delta^-(m)) \leq n_*(r ; \Gamma_\delta^-(m)^* \Gamma_\delta^-(m))
  $$
  for $r>0$ and $m \geq 1$, our proof of \eqref{dj51} also provides a slightly modified proof
 of the lower bound \cite[Eq. (6.3)]{bmr}. This modification does not use the first inequality in \cite[Eq. (6.10)]{bmr} in whose proof we found a gap; we thank Dr  Marcus Carlsson for drawing our attention to the problem with that inequality.

\section{Proof of the main result for Neumann\\ Hamiltonians} \label{s4}

In this section we prove Theorem \ref{thdj2}. Assume that $0 \leq V \in L_0^{\infty}({\mathcal O})$. Then, similarly to \eqref{dj30},  the Birman - Schwinger principle implies
    \bel{dj60}
    {\mathcal N}_N(\lambda) = n_+(1; V^{1/2} (H_{0,N} - {\mathcal E}_N + \lambda)^{-1} V^{1/2}), \quad \lambda > 0.
    \ee
    Fix $\delta \in (0,\infty)$, and define the orthogonal projection $P_N(\delta) : L^2({\mathcal O}) \to L^2({\mathcal O})$ by
    $$
    P_N(\delta) : = {\mathcal F}^* \int^\oplus_{(k_*-\delta,k_*+\delta)}  \pi_N(k) \,dk {\mathcal F},
    $$
    where $\pi_N(k) : = |\psi_N(\cdot; k)\rangle \langle \psi_N(\cdot; k)|$, the function $\psi_N(\cdot; k)$ being defined at the end of Subsection \ref{ss21}, and $k_*$ is the  point where $E_N$ attains its minimum ${\mathcal E}_N$.
    Then by analogy with \eqref{dj31}, the estimates
    $$
    n_+(1; V^{1/2} (H_{0,N} - {\mathcal E}_N + \lambda)^{-1} P_N(\delta) V^{1/2}) \leq n_+(1; V^{1/2} (H_{0,N} - {\mathcal E}_N + \lambda)^{-1} V^{1/2}) \leq
    $$
    \bel{dj61}
    n_+(1 - \varepsilon; V^{1/2} (H_{0,N} - {\mathcal E}_N + \lambda)^{-1} P_N(\delta) V^{1/2}) +
    n_+(\varepsilon; V^{1/2} (H_{0,N} - {\mathcal E}_N + \lambda)^{-1} (I-P_N(\delta)) V^{1/2})
    \ee
    hold true for any $\lambda > 0$ and $\varepsilon \in (0,1)$. Next, similarly to \eqref{dj31a}, we have
    \bel{dj62}
    n_+(\varepsilon; V^{1/2} (H_{0,N} - {\mathcal E}_N + \lambda)^{-1} (I-P_N(\delta)) V^{1/2}) = O_{\varepsilon, \delta}(1), \quad \lambda \downarrow 0.
    \ee
Define  $ {\mathcal R}_N(\lambda ; \delta) : L^2(-\delta, \delta) \to  L^2(\mathcal{O})$ as the operator with integral kernel
$$
(2\pi)^{-1/2} V(x,y)^{1/2} e^{iky} \psi_N(x; k_* + k) (E_N(k_* + k) - {\mathcal E}_N + \lambda)^{-1/2}, \quad k \in (-\delta, \delta), \quad (x,y) \in {\mathcal{O}}.
$$
It is easy to see that $V^{1/2} (H_{0,N} - {\mathcal E}_N + \lambda)^{-1} P_N(\delta) V^{1/2}$ and ${\mathcal R}_N(\lambda ; \delta)^*  {\mathcal R}_N(\lambda ; \delta)$ have the same non zero eigenvalues. Therefore, for each $r > 0$ and $\lambda > 0$ we have
    \bel{dj63}
    n_+(r^2; V^{1/2} (H_{0,N} - {\mathcal E}_N + \lambda)^{-1} P_N(\delta) V^{1/2}) = n_*(r;  {\mathcal R}_N(\lambda ;\delta)).
    \ee
    Next, define  $R_N(\lambda ; \delta) : L^2(-\delta, \delta) \to  L^2(\mathcal{O})$ as the operator with integral kernel
$$
(2\pi)^{-1/2} V(x,y)^{1/2} e^{iky} \psi_N(x; k_* ) (\mu k^2 + \lambda)^{-1/2}, \quad k \in (-\delta, \delta), \quad (x,y) \in {\mathcal{O}},
$$
the number $\mu$ being introduced in \eqref{dj75}. Applying the Ky Fan inequalities, we get
$$
n_*(r (1 + \varepsilon); R_N(\lambda ;\delta)) -  n_*(r\varepsilon;  {\mathcal R}_N(\lambda ;\delta) - R_N(\lambda ;\delta)) \leq
$$
$$
n_*(r;  {\mathcal R}_N(\lambda ;\delta)) \leq
$$
    \bel{dj64}
    n_*(r (1 - \varepsilon); R_N(\lambda ;\delta)) +  n_*(r\varepsilon;  {\mathcal R}_N(\lambda ;\delta) - R_N(\lambda ;\delta)), \quad \lambda \downarrow 0,
    \ee
    for any $r>0, \lambda > 0, \varepsilon \in (0,1)$. Our next goal is to show that
     \bel{dj65}
     n_*(r;  {\mathcal R}_N(\lambda ;\delta) - R_N(\lambda ;\delta)) = O_{r, \delta}(1), \quad \lambda \downarrow 0.
     \ee
     To this end, fix $\rho \in (0,\infty)$, set
     $$
     B_\rho : = \left\{(x,y) \in {\mathcal O} \, | \, x^2 + y^2 < \rho^2\right\},
     $$
     $$
     U_{1,\rho} : = \one_{B_\rho} V^{1/2}, \quad  U_{2,\rho} : = \one_{\rd \setminus {B_\rho}} V^{1/2},
     $$
     and write
     $$
     {\mathcal R}_N(\lambda ;\delta) - R_N(\lambda ;\delta) = \left( U_{1,\rho} +  U_{2,\rho}\right)(T_1 + T_2)
     $$
     where
     $T_1 = T_1(\lambda; \delta) : L^2(-\delta, \delta) \to L^2({\mathcal O})$ is the operator with integral kernel
     $$
     (2\pi)^{-1/2} e^{iky} \psi_N(x; k_* + k) \left((E_N(k_* + k) - {\mathcal E}_N + \lambda)^{-1/2} - (\mu k^2 + \lambda)^{-1/2}\right),
     $$
     and
     $T_2 = T_2(\lambda; \delta) : L^2(-\delta, \delta) \to L^2({\mathcal O})$ is the operator with integral kernel
     $$
     (2\pi)^{-1/2}  e^{iky} \left(\psi_N(x; k_* + k) - \psi_N(x; k_*)\right) (\mu k^2 + \lambda)^{-1/2},
     $$
     with $k \in (-\delta, \delta)$, and $(x,y) \in   {\mathcal O}$. It is not difficult to see that for any $\lambda > 0$ we have
     \bel{dj66}
     \|U_{1,\rho} T_j(\lambda; \delta)\|_2^2 \leq \frac{\delta}{\pi} \sup_{x \in \re_+} \int_\re U_{1,\rho}(x,y)^2 dy  \sup_{k \in (-\delta, \delta)} Q_j(k)^2,
     \quad j=1,2, \ee
     \bel{dj67}
     \|U_{2,\rho} T_j(\lambda;  \delta)\| \leq \|U_{2,\rho}\|_{L^{\infty}(\mathcal{O})} \sup_{k \in (-\delta, \delta)} |Q_j(k)|, \quad j=1,2,
     \ee
     where
     $$
     Q_1(k) : = \frac{E_N(k_* + k) - {\mathcal E}_N - \mu k^2}{\sqrt{\mu} |k| \sqrt{E_N(k_* + k) - {\mathcal E}_N}\left(\sqrt{\mu} |k| + \sqrt{E_N(k_* + k) - {\mathcal E}_N}\right)},
     $$
     $$
      Q_2(k)^2 : =  \mu^{-1} \int_{\re_+}\left(\frac{\partial \psi_N}{\partial k}(x; k_* + k)\right)^2 dx.
    $$
    Now the Ky Fan inequalities imply
    \bel{dj68}
    n_*(r;  {\mathcal R}_N(\lambda ;\delta) - R_N(\lambda ;\delta)) \leq \sum_{j=1,2} n_*(r/2; U_{j,\rho} (T_1 + T_2)).
    \ee
    Since $V \in L_0^{\infty}({\mathcal O})$,  we can choose, using \eqref{dj67}, the number $\rho = \rho(r)$ so large that $\|U_{2,\rho} (T_1 + T_2)\| \leq r/2$, and hence
        \bel{dj69}
    n_*(r/2; U_{2,\rho} (T_1 + T_2)) = 0.
    \ee
    On the other hand, by \eqref{dj66} and \eqref{dj36} with $p=2$ we have
     \bel{dj70}
    n_*(r/2; U_{1,\rho} (T_1 + T_2)) \leq 4r^{-2} \|U_{1,\rho} (T_1 + T_2)\|_2^2 = O(1), \quad \lambda \downarrow 0.
    \ee
    Putting together \eqref{dj68} -- \eqref{dj70}, we obtain \eqref{dj65}. \\
    Further, let ${\mathcal S}_N(\lambda; \delta) : L^2(\re) \to L^2(\re)$ be the operator with integral kernel
    $$
    (2\pi)^{-1/2} v(y)^{1/2} e^{iky} (\mu k^2 + \lambda)^{-1/2} \one_{(-\delta, \delta)}(k), \quad k \in \re, \quad y \in \re,
    $$
    the potential $v$ being defined in \eqref{dj12}. It is easy to see that the operators $R_N(\lambda;\delta)$ and ${\mathcal S}_N(\lambda; \delta)$ have the same non zero singular values, and hence
    \bel{dj74}
     n_*(r;  R_N(\lambda ;\delta)) =  n_*(r; {\mathcal S}_N(\lambda ;\delta)), \quad \lambda > 0, \quad r> 0.
     \ee
     Finally, let $S_N(\lambda) : L^2(\re) \to L^2(\re)$ be the operator with integral kernel
    $$
    (2\pi)^{-1/2} v(y)^{1/2} e^{iky} (\mu k^2 + \lambda)^{-1/2}, \quad k \in \re, \quad y \in \re.
    $$
    By the Ky Fan inequalities,
    $$
n_*(r (1 + \varepsilon); S_N(\lambda)) -  n_*(r\varepsilon; {\mathcal S}_N(\lambda ;\delta) - S_N(\lambda)) \leq
$$
$$
n_*(r; {\mathcal S}_N(\lambda ;\delta)) \leq
$$
    \bel{dj71}
    n_*(r (1 - \varepsilon); S_N(\lambda)) +  n_*(r\varepsilon; {\mathcal S}_N(\lambda ;\delta) - S_N(\lambda)), \quad \lambda \downarrow 0,
    \ee
    for any $r>0, \lambda > 0, \varepsilon \in (0,1)$. Arguing as in the derivation of \eqref{dj65}, we easily obtain
     \bel{dj72}
     n_*(r; {\mathcal S}_N(\lambda ;\delta) - S_N(\lambda)) = O_{r, \delta}(1), \quad \lambda \downarrow 0.
     \ee
     By the Birman -- Schwinger principle,
     \bel{dj73}
     n_*(r; S_N(\lambda)) =
     {\rm Tr}\,\one_{(-\infty, -\lambda)}\left( - \mu \frac{d^2}{dy^2} - r^{-2} v\right).
       \ee
       Combining \eqref{dj60} -- \eqref{dj65} with \eqref{dj74} -- \eqref{dj73}, we obtain \eqref{dj26}. \\

       {\bf Acknowledgements.} The authors are grateful for hospitality and financial
support to the  Mittag-Leffler Institute, Sweden, where most of this work was done within the the framework of the Programme ``{\em Hamiltonians in Magnetic Fields}", Fall 2012.\\
V. Bruneau was partially supported by ANR project NOSEVOL (ANR 2011
BS0101901).
P. Miranda was partially supported by the Chilean Science Foundation
{\em Fondecyt} under Grant 3120087.
 G. Raikov  was partially supported by {\em N\'ucleo Cient\'ifico ICM} P07-027-F ``{\em
Mathematical Theory of Quantum and Classical Magnetic Systems"} within the framework of the {\em International
Spectral Network}, as well as
 by  {\em Fondecyt} under Grant 1090467.\\

{\sc Vincent Bruneau }\\
Universit\'e Bordeaux I, Institut de Math\'ematiques de
Bordeaux,\\
UMR CNRS 5251,  351, Cours de la Lib\'eration, 33405 Talence,
France\\
E-Mail: vbruneau@math.u-bordeaux1.fr\\

{\sc Pablo Miranda}\\
Departamento de F\'isica,
Facultad de F\'isica,\\
Pontificia Universidad Cat\'olica de Chile,
Vicu\~na Mackenna 4860, Santiago de Chile\\
E-Mail: pmiranda@fis.puc.cl\\

{\sc Georgi Raikov}\\
 Departamento de Matem\'aticas, Facultad de
Matem\'aticas,\\ Pontificia Universidad Cat\'olica de Chile,
Vicu\~na Mackenna 4860, Santiago de Chile\\
E-Mail: graikov@mat.puc.cl

\end{document}